\newcommand{\K}{\Bbb K}
\newcommand{\Z}{\Bbb Z}
\newcommand{\N}{\Bbb N}
\def\NL{\hfill\break}
\newcommand{\set}[1]{\left\{#1\right\}}
\newcommand{\parth}[1]{\left(#1\right)}
\makeatletter \@addtoreset{equation}{section}
\newlength{\defbaselineskip}
\newcommand{\setlinespacing}[1]%
{\setlength{\baselineskip}{#1 \defbaselineskip}}
\newtheorem{theorem}{Theorem}[section]
\newtheorem{corollary}[theorem]{Corollary}
\newtheorem{lemma}[theorem]{Lemma}
\newtheorem{proposition}[theorem]{Proposition}
\theoremstyle{remark}
\newtheorem{definition}[theorem]{Definition}
\newtheorem{example}[theorem]{Example}
\newtheorem{question}[theorem]{Question}
\begin{document}

\setlinespacing{1.1}
\title{Generalized rigid modules and their polynomial extensions}

\author[M. Louzari]{Mohamed Louzari}
\address{Mohamed Louzari\\
University Abdelmalek Essaadi, Faculty of sciences\\
Department of mathematics, Tetouan, Morocco}
\email{mlouzari@yahoo.com}

\author[A. Reyes]{Armando Reyes}
\address{Armando Reyes\\
Universidad Nacional de Colombia, Sede Bogot\'{a}\\
Departamento de Matem\'{a}ticas, Colombia}
\email{mareyesv@unal.edu.co}

\maketitle

\begin{abstract}Let $R$ be a ring with unity, $\sigma$ an endomorphism of $R$ and $M_R$ a right $R$-module. In this paper, we continue studding $\sigma$-rigid modules that were introduced by Gunner et al. \cite{generalized/rigid}. We give some results on $\sigma$-rigid modules and related concepts. Also, we study the transfer of $\sigma$-rigidness from a module $M_R$ to its extensions, as triangular matrix modules and polynomial modules.
\end{abstract}

\footnote[0]{\NL 2010 Mathematics Subject Classification. 16U80, 16S36
\NL Key words and phrases. Generalized rigid modules, Rigid modules, Reduced modules, Semicommutative modules}

\section*{Introduction }

In this paper, $R$ denotes an associative ring with unity and modules are unitary. We write $M_R$ to mean that $M$ is
a right module. Let $\N$ be the set of all natural integers. Throughout, $\sigma$ is an endomorphism of $R$ with $\sigma(1)=1$. The set of all endomorphisms (respectively, automorphisms) of $R$ is denoted by $End(R)$ (respectively, $Aut(R)$). For a subset $X$ of a module $M_R$, $r_R(X)=\{a\in R|Xa=0\}$ and $\ell_R(X)=\{a\in R|aX=0\}$ will stand for the right and the left annihilator of $X$ in $R$ respectively. A ring $R$ is called {\it semicommutative} if for every $a\in R$, $r_R(a)$ is an ideal of $R$ (equivalently, for any $a,b\in R$, $ab=0$ implies $aRb=0$). In \cite{rege2002}, a module $M_R$ is semicommutative, if for any $m\in M$ and $a\in R$, $ma=0$ implies $mRa=0$. A module $M_R$ is called $\sigma$-semicommutative \cite{zhang/chen} if, for any $m\in M$ and $a\in R$, $ma=0$ implies $mR\sigma(a)=0$.
\smallskip
\par
\smallskip
Following Annin \cite{annin}, a module $M_R$ is $\sigma$-{\it compatible}, if for any $m\in M$ and $a\in R$, $ma=0$ if and only if $m\sigma(a)=0$. An $\sigma$-compatible module $M$ is called $\sigma$-reduced if for any $m\in M$ and $a\in R$, $ma=0$ implies $mR\cap Ma=0$. The module $M_R$ is called reduced if it is $id_R$-reduced, where $id_R$ is the identity endomorphism of $R$ (equivalently, for any $m\in M$ and $a\in R$, $ma^2=0$ implies $mRa=0$) \cite{lee/zhou}.
\smallskip
\par
\smallskip
According to Krempa \cite{krempa}, an endomorphism $\sigma$ of a ring $R$ is called {\it rigid}, if $a\sigma(a)=0$ implies $a=0$ for any $a\in R$, and $R$ is called a $\sigma$-{\it rigid} ring if the endomorphism $\sigma$ is rigid. Motivated by the properties of $\sigma$-rigid rings that have been studied in \cite{hirano/99,hong/2000, hong/2003,krempa}, Guner el al. \cite{generalized/rigid}, introduced $\sigma$-rigid modules as a generalization of $\sigma$-rigid rings. A module $M_R$ is called $\sigma$-{\it rigid}, if $ma\sigma(a)=0$ implies $ma=0$ for any $m\in M$ and $a\in R$. Clearly, $\sigma$-reduced modules are $\sigma$-rigid, but the converse need not be true \cite[Example 2.18]{generalized/rigid}. Recall that, a module $M_R$ is $\sigma$-reduced if and only if it is $\sigma$-semicommutative and $\sigma$-rigid \cite[Theorem 2.16]{generalized/rigid}. Thus, the concept of $\sigma$-reduced modules is connected to the $\sigma$-semicommutative and $\sigma$-rigid modules. A module $M_R$ is called {\it rigid}, if it is $id_R$-rigid.

\smallskip
\par
\smallskip

In \cite{lee/zhou}, Lee-Zhou introduced the following notations.
$$M[x;\sigma]:=\set{\sum_{i=0}^sm_ix^i:s\geq 0,m_i\in M},$$
$$\;\;\;M[x,x^{-1};\sigma]:=\set{\sum_{i=-s}^tm_ix^i:\;t\geq 0,s\geq 0,m_i\in M}.$$

\smallskip
\par
\smallskip

Each of these is an abelian group under an obvious addition operation, and thus $M[x;\sigma]$ becomes a module over
$R[x;\sigma]$ under the following scalar product operation. For $m(x)=\sum_{i=0}^n m_ix^i\in M[x;\sigma]$ and $f(x)=\sum_{j=0}^m a_jx^j\in R[x;\sigma]$,
$$m(x)f(x)=\sum_{k=0}^{n+m}\parth{\sum_{k=i+j}m_i\sigma^i(a_j)}x^k\leqno(*)$$

\smallskip
\par
\smallskip

The module $M[x;\sigma]$ is called the {\it skew polynomial extension} of $M$. If $\sigma\in Aut(R)$, then with a scalar product similar to $(*)$, $M[x,x^{-1};\sigma]$ becomes a module over $R[x,x^{-1};\sigma]$, this is called the {\it  skew Laurent polynomial extension} of $M$. In particular, if $\sigma=id_R$ then we get the classical extensions $M[x]$ and $M[x,x^{-1}]$.

\smallskip
\par
\smallskip

According to Zhang and Chen \cite{zhang/chen}, a module $M_R$ is called $\sigma$-{\it skew Armendariz}, if $m(x)f(x)=0$ where $m(x)=\sum_{i=0}^nm_ix^i\in M[x;\sigma]$ and $f(x)=\sum_{j=0}^ma_jx^j\in R[x;\sigma]$ implies $m_i\sigma^i(a_j)=0$ for all $i$ and $j$. Also, $M_R$ is an
Armendariz module if and only if it is $id_R$-skew Armendariz. A ring $R$ is skew-Armendariz if and only if $R_R$ is a skew-Armendariz module.

\smallskip
\par
\smallskip

In this work, we study some connections between rigid, $\sigma$-rigid, semicommutative, $\sigma$-semicommutative, abelian and $\sigma$-reduced modules. Also, we show that the class of $\sigma$-rigid modules is not closed under homomorphic images and some module extensions. Moreover, we examine the transfer of $\sigma$-reducibly, ${\sigma}$-semicommutative and $\sigma$-rigidness from a module $M_R$ to its extensions of the form $M[x]/M[x](x^n)$ where $n\geq 2$ is an integer, and vice versa. Indeed, if $\;M[x]/M[x](x^n)$ is $\overline{\sigma}$-reduced (respectively, $\overline{\sigma}$-rigid, $\overline{\sigma}$-semicommutative) as a right $\;R[x]/(x^n)$-module, then $M_R$ is $\sigma$-reduced (respectively, $\sigma$-rigid, $\sigma$-semicommutative). However, the converse is not true. Furthermore, for a module $M_R$, we study the behavior of $\sigma$-rigidness regarding to its skew polynomial extensions.

\section{Generalized rigid modules and related modules}

In this section, we give some connections between rigid, $\sigma$-rigid, semicommutative, $\sigma$-semicommutative and abelian modules. We begin with the next definition.

\begin{definition}\label{df1}Let $M_R$ be a module, $m\in M$ and $a\in R$. We say that $M_R$ satisfies the condition:
\NL$(1)$  $\;(\mathcal{C}_1)$, if $ma=0$ implies $m\sigma(a)=0$.
\NL$(2)$ $\;(\mathcal{C}_2)$, if $m\sigma(a)=0)$ implies $ma=0$.
\end{definition}

\begin{lemma}\label{lem2}$(1)$ Every rigid $($respectively, $\sigma$-semicommutative$)$ module satisfying $(\mathcal{C}_2)$ is $\sigma$-rigid $($respectively, semicommutative$)$.
\NL$(2)$ Every $\sigma$-rigid $($respectively, semicommutative$)$ module satisfying $(\mathcal{C}_1)$ is rigid $($respectively, $\sigma$-semicommutative$)$.
\NL$(3)$ For $\sigma$-compatible modules, the notions of rigidity $($respectively, semicommutativity$)$ and $\sigma$-rigidity $($respectively, $\sigma$-semicommutativity$)$ coincide.
\end{lemma}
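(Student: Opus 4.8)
The plan is to prove all three parts by directly unwinding the definitions, in each case applying condition $(\mathcal{C}_1)$ or $(\mathcal{C}_2)$ to a judiciously chosen element of $M$. Since both conditions are quantified over all $m\in M$, I may apply them not only to the given element $m$ but also to elements of the form $ma$ or $mr$; recognizing which auxiliary element to feed into the condition is the one point requiring care. Recall that rigidity means $id_R$-rigidity, i.e. $ma^2=0$ implies $ma=0$.

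For part $(1)$, suppose first that $M_R$ is rigid and satisfies $(\mathcal{C}_2)$, and that $ma\sigma(a)=0$. Applying $(\mathcal{C}_2)$ to the element $ma\in M$ (with scalar $a$) gives $ma\cdot a=ma^2=0$, whence rigidity yields $ma=0$; thus $M_R$ is $\sigma$-rigid. Next suppose $M_R$ is $\sigma$-semicommutative and satisfies $(\mathcal{C}_2)$, and that $ma=0$. Then $mR\sigma(a)=0$, so $mr\sigma(a)=0$ for every $r\in R$; applying $(\mathcal{C}_2)$ to each element $mr$ gives $mra=0$, i.e. $mRa=0$, so $M_R$ is semicommutative.

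Part $(2)$ is the mirror image, replacing $(\mathcal{C}_2)$ by $(\mathcal{C}_1)$ and interchanging the roles of $a$ and $\sigma(a)$. If $M_R$ is $\sigma$-rigid with $(\mathcal{C}_1)$ and $ma^2=0$, then $(\mathcal{C}_1)$ applied to $ma$ gives $ma\sigma(a)=0$, and $\sigma$-rigidity forces $ma=0$. If $M_R$ is semicommutative with $(\mathcal{C}_1)$ and $ma=0$, then $mRa=0$, so $mra=0$ for all $r$; applying $(\mathcal{C}_1)$ to each $mr$ gives $mr\sigma(a)=0$, i.e. $mR\sigma(a)=0$.

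Finally, part $(3)$ is immediate from $(1)$ and $(2)$: a $\sigma$-compatible module satisfies both $(\mathcal{C}_1)$ and $(\mathcal{C}_2)$ by definition, so the implications in $(1)$ and $(2)$ combine to give the equivalence of rigidity with $\sigma$-rigidity and of semicommutativity with $\sigma$-semicommutativity. I expect no serious obstacle here; the only thing to watch is that the conditions be applied to the correct auxiliary element, since applying them to $m$ itself rather than to $ma$ or $mr$ would fail to close the argument.
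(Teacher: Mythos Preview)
Your proof is correct. The paper itself dismisses this lemma with a one-line ``The verification is straightforward,'' and what you have written is exactly the straightforward verification the authors have in mind: applying $(\mathcal{C}_1)$ or $(\mathcal{C}_2)$ to the auxiliary elements $ma$ or $mr$ is the only sensible way to proceed, and you carry it out cleanly.
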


\begin{proof}The verification is straightforward.
\end{proof}

A module $M$ is called {\it abelian} \cite{agayev/2009.}, if for any $m\in M$ and any $e^2=e,r\in R$, we have $mer=mre$.

\begin{lemma}Let $M_R$ be a module and $\sigma$ an endomorphism of $R$. Then
\NL$(1)$ If $M_R$ is rigid, then it is abelian,
\NL$(2)$ If $M_R$ is $\sigma$-rigid which satisfies the condition $(\mathcal{C}_1)$, then it is abelian,
\NL$(3)$ If $M_R$ is $\sigma$-semicommutative, then it is abelian.
\end{lemma}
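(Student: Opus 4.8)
The plan is to handle all three parts through a single mechanism: for an idempotent $e$ and $r\in R$, the elements $er(1-e)$ and $(1-e)re$ of $R$ square to zero, and forcing $m$ to annihilate them produces the desired commutation $mer=mre$.

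For (1), fix $m\in M$, an idempotent $e$ and $r\in R$, and set $a=er(1-e)$. Using $(1-e)e=0$ one sees $a^{2}=0$, so $ma^{2}=0$ holds trivially; rigidity (the $id_R$-rigid condition $ma^{2}=0\Rightarrow ma=0$) then yields $m\,er(1-e)=0$, i.e. $mer=mere$. Applying the same to $b=(1-e)re$, which likewise satisfies $b^{2}=0$, gives $m(1-e)re=0$, i.e. $mre=mere$. Combining, $mer=mere=mre$, which is exactly the abelian condition. For (2) I would not redo this computation: by Lemma~\ref{lem2}(2) a $\sigma$-rigid module satisfying $(\mathcal{C}_1)$ is rigid, so (1) applies verbatim.

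Part (3) is where the real work lies, because $\sigma$-semicommutativity only puts $\sigma(a)$, not $a$, on the right, so a naive imitation of (1) produces the twisted idempotent $\sigma(e)$ instead of $e$. The key auxiliary step is to show that $\sigma$ is invisible on idempotents as seen from $M$, namely $me=m\sigma(e)$ for every $m\in M$ and every idempotent $e$. This follows from condition $(\mathcal{C}_1)$ --- which $\sigma$-semicommutativity implies by taking the ring element equal to $1$ --- applied to the trivial relations $me(1-e)=0$ and $m(1-e)e=0$: the first gives $me=me\sigma(e)$ and the second $m\sigma(e)=me\sigma(e)$, whence $me=m\sigma(e)$. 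With this identity in hand I apply full $\sigma$-semicommutativity to $me(1-e)=0$ to obtain $mer=mer\sigma(e)$ for all $r$, then replace $mer\sigma(e)$ by $mere$ via the identity to get $mer=mere$; symmetrically, applying it to $m(1-e)e=0$ gives $mr\sigma(e)=mer\sigma(e)$, which the identity turns into $mre=mere$. Hence $mer=mre$ again.

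The main obstacle is precisely this $\sigma$-twist in (3); everything hinges on the identity $me=m\sigma(e)$, which lets one pass freely between $e$ and $\sigma(e)$ and thereby recover the clean abelian equality from the otherwise twisted consequences of $\sigma$-semicommutativity. Parts (1) and (2) are then routine, the former by the nilpotent-element trick and the latter by invoking Lemma~\ref{lem2}(2).
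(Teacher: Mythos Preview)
Your proof is correct and follows essentially the same route as the paper. Parts (1) and (2) are identical to the paper's argument (the nilpotent trick $[er(1-e)]^2=[(1-e)re]^2=0$ and the reduction via Lemma~\ref{lem2}(2)). For part (3), the paper likewise starts from $me(1-e)=0$ and $m(1-e)e=0$ and applies $\sigma$-semicommutativity, but it outsources the passage from $\sigma(1-e)$ back to $1-e$ to an external reference (\cite[Lemma~2.9]{lee/zhou}); your explicit identity $me=m\sigma(e)$ accomplishes exactly this step and makes the argument self-contained.
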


\begin{proof}$(1)$ Let $e^2=e,r\in R$ and $m\in M$, we have $[er(1-e)]^2=0$ then $m[er(1-e)]^2=0$, since $M_R$ is rigid then $m[er(1-e)]=0$ which gives $mer=mere$. On the other side, we have $[(1-e)re]^2=0$, with the same manner as above, it gives $mre=mere$. Thus $mer=mre$ for all $r\in R$.
\NL$(2)$ If $M_R$ is $\sigma$-rigid with the condition $(\mathcal{C}_1)$, then it is rigid by Lemma \ref{lem2}(2).
\NL$(3)$ Suppose that $M_R$ is $\sigma$-semicommutative, then $M_R$ satisfies the condition $(\mathcal{C}_1)$. Let $m\in M$ and $e^2=e\in R$, we have $me(1-e)=0$ then $meR\sigma((1-e))=0$, so $mer(1-e)=0$ for all $r\in R$, by \cite[Lemma 2.9]{lee/zhou}. Also, we have $m(1-e)e=0$ which gives $m(1-e)re=0$ for all $r\in R$. Therefore $mer=mre$ for all $r\in R$.
\end{proof}

The class of $\sigma$-rigid modules need not to be closed under homomorphic images.

\begin{example}Consider $R=\Z$, the ring of all integer numbers. Take $M=\Z_{\Z}$ and $N=12\Z$ a submodule of $M$. It is clear that $M$ is rigid, however $M/N$ is not rigid. Consider $m=3+N\in M/N$ and $a=2\in R$. We have $ma^2=0$ and $ma\neq 0$.
\end{example}

Recall that a module $M_R$ is said to be {\it torsion-free} if for every non zero-divisor $a\in R$ and $0\neq m\in M$, we have $ma\neq 0$. A submodule $N$ of a module $M$ is called a {\it prime submodule} of $M$ if whenever $ma\in N$ for $m\in M$ and $a\in R$, then $m\in N$ or $Ma\subseteq N$.

\begin{proposition}\label{prop2} Let $M_R$ be a module over a ring $R$ that has no zero divisors, and $N$ be a prime submodule of $M$ such that $M/N$ is torsion-free. Then $M$ is $\sigma$-rigid if and only if $N$ and $M/N$ are $\sigma$-rigid.
\end{proposition}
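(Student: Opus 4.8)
The plan is to prove both implications, using as the two main tools the primeness of $N$ and the torsion-freeness of $M/N$. Since $R$ has no zero divisors, every nonzero element of $R$ is a (two-sided) non-zero-divisor, so torsion-freeness of $M/N$ amounts to the cancellation rule: for $m\in M$ and $c\in R$ with $c\neq 0$, if $mc\in N$ then $m\in N$. This rule and the defining condition of $\sigma$-rigidity are what drive the whole argument.

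For the forward direction, assume $M$ is $\sigma$-rigid. That $N$ is $\sigma$-rigid is immediate, since $N$ is a submodule of $M$ and the defining implication $na\sigma(a)=0\Rightarrow na=0$ is inherited verbatim. For $M/N$, I would take $\overline{m}\in M/N$ and $a\in R$ with $\overline{m}\,a\sigma(a)=0$, i.e. $ma\sigma(a)\in N$, and argue by cases on $\sigma(a)$. If $\sigma(a)\neq 0$, then $\sigma(a)$ is a non-zero-divisor and the cancellation rule applied to $\overline{ma}\,\sigma(a)=0$ gives $ma\in N$, that is $\overline{m}\,a=0$. If $\sigma(a)=0$, then $a\sigma(a)=0$, so $m'a\sigma(a)=0$ for every $m'\in M$; the $\sigma$-rigidity of $M$ then yields $m'a=0$, hence $Ma=0$ and in particular $ma=0\in N$. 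Either way $\overline{m}\,a=0$, so $M/N$ is $\sigma$-rigid. (Note that this half does not even use primeness.)

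For the converse, assume $N$ and $M/N$ are $\sigma$-rigid and take $m\in M$, $a\in R$ with $ma\sigma(a)=0$; the goal is $ma=0$. Here the decisive move is to apply the prime condition to the relation $m\cdot(a\sigma(a))\in N$, which splits into two cases: either $m\in N$, or $Ma\sigma(a)\subseteq N$. In the first case $m\in N$, and the $\sigma$-rigidity of $N$ applied to $m$ directly gives $ma=0$. In the second case I would first push $Ma\sigma(a)\subseteq N$ down to $Ma\subseteq N$: for every $m'\in M$ we have $\overline{m'}\,a\sigma(a)=0$ in $M/N$, and the $\sigma$-rigidity of $M/N$ forces $\overline{m'}\,a=0$, i.e. $m'a\in N$. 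If $a=0$ then trivially $ma=0$; if $a\neq 0$ then $a$ is a non-zero-divisor, and $Ma\subseteq N$ together with torsion-freeness of $M/N$ forces $M=N$, so once more $m\in N$ and the $\sigma$-rigidity of $N$ finishes the argument.

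I expect the main obstacle to be precisely this last direction, and in particular the recognition that one should feed $m\cdot(a\sigma(a))$, rather than $(ma)\cdot\sigma(a)$, into the prime condition. The tempting route---reduce modulo $N$ to get $ma\in N$ and then try to conclude $ma=0$ inside $N$---stalls, because from $ma\in N$ and $(ma)\sigma(a)=0$ one cannot extract $ma=0$ using only the $\sigma$-rigidity of $N$; that would require $N$ itself to be torsion-free, which is not hypothesized. Primeness resolves this by forcing $m$ itself into $N$ (after using torsion-freeness to collapse $M=N$ when necessary), so that the $\sigma$-rigidity of $N$ can be applied to $m$ instead of to $ma$. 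The only remaining care is the bookkeeping of the degenerate cases $a=0$ and $\sigma(a)=0$, where the hypothesis that $R$ has no zero divisors is exactly what guarantees that any surviving nonzero scalar is cancellable.
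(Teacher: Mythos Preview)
Your argument is correct. The converse direction follows the paper almost verbatim: feed $m\cdot(a\sigma(a))\in N$ into the prime condition, handle $m\in N$ via the $\sigma$-rigidity of $N$, and in the other case pass to $(M/N)a\sigma(a)=0$, use the $\sigma$-rigidity of $M/N$ to get $(M/N)a=0$, and then cancel. The only cosmetic difference is that the paper arranges the cases as $m\in N$ versus $m\notin N$ and concludes $a=0$ from $(M/N)a=0$ (since $m\notin N$ forces $M/N\neq 0$), whereas you conclude $M=N$; both routes end in the same place.

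Your forward direction, however, genuinely differs from the paper's. The paper again invokes primeness of $N$ on $ma\sigma(a)\in N$ and, in the branch $Ma\sigma(a)\subseteq N$, uses torsion-freeness to deduce $a\sigma(a)=0$ in $R$ and then the $\sigma$-rigidity of $M$ to get $ma=0$. You bypass primeness entirely by casing on $\sigma(a)$: if $\sigma(a)\neq 0$ you cancel it directly in $M/N$, and if $\sigma(a)=0$ you observe $a\sigma(a)=0$ globally and use the $\sigma$-rigidity of $M$. This is a nice observation---it shows that the primeness hypothesis is only needed for the ($\Leftarrow$) implication, something the paper's proof obscures. What the paper's more uniform approach buys is symmetry: the same prime-submodule split drives both halves, which makes the role of primeness look more essential than it is.
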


\begin{proof}$(\Rightarrow)$. Clearly, $N$ is $\sigma$-rigid. Now, let $\overline{m}\in M/N$ such that $\overline{m}a\sigma(a)=0$, hence $ma\sigma(a)\in N$. Since $N$ is prime then $m\in N$ or $Ma\sigma(a)\subseteq N$. If $m\in N$ then $ma\in N$, so $\overline{m}a=0$. Now, if $Ma\sigma(a)\subseteq N$, then $(M/N)a\sigma(a)=0$ in $M/N$. Since $M/N$ is torsion-free and $R$ without zero-divisors, we get $a\sigma(a)=0$. Then $ma\sigma(a)=0$, which implies $ma=0$ because $M$ is $\sigma$-rigid. Thus $\overline{m}a=0$. Hence $M/N$ is $\sigma$-rigid.
\NL$(\Leftarrow)$. Let $m\in M$ and $a\in R$ such that $ma\sigma(a)=0$. If $m\in N$, we have $ma=0$ because $N$ is $\sigma$-rigid. Now, suppose that $m\not\in N$. Since $N$ is prime, then from $ma\sigma(a)\in N$, we get $Ma\sigma(a)\subseteq N$. Hence $(M/N)a\sigma(a)=0$ in $M/N$. But $M/N$ is $\sigma$-rigid, then $(M/N)a=0$ which implies $a=0$, since $M/N$ is torsion-free and $R$ has no zero divisors. Therefore $ma=0$.
\end{proof}

The next example shows that the condition ``$R$ has no zero divisors'' is not superfluous in Proposition \ref{prop2}.

\begin{example}\label{example/eleborated}{\rm Let $F$ be a field,
$R=\left(
\begin{array}{cc}
F & F \\
0 & F \\
\end{array}
\right)$ and $\sigma$ an endomorphism of $R$ such that $\sigma\parth{\left(
\begin{array}{cc}
a & b \\
0 & c \\
\end{array}
\right)}=\left(
\begin{array}{cc}
a & -b \\
0 & \;\;c \\
\end{array}
\right)$. Consider the right $R$-module $M=\left(
\begin{array}{cc}
0 & F \\
F & F \\
\end{array}
\right)$ and the submodule $K=\left(
\begin{array}{cc}
0 & F \\
0 & F \\
\end{array}
\right)$.
\NL $\mathbf{(1)}$ By \cite[Example 2.6]{generalized/rigid}, $K$ and $M/K$ are $\sigma$-rigid but $M$ is not $\sigma$-rigid.
\NL$\mathbf{(2)}$ $K$ is a prime submodule of $M$. Let
$m=\left(
\begin{array}{cc}
0 & \beta \\
\alpha & \gamma \\
\end{array}
\right)\in M$ and $a=\left(
\begin{array}{cc}
x & y \\
0 & z \\
\end{array}
\right)\in R$. We have
$ma=\left(
\begin{array}{cc}
0 & \beta z\\
\alpha x & \alpha y+\gamma z \\
\end{array}
\right)$. Then $ma\in K\Leftrightarrow \alpha=0$ or $x=0$. If $\alpha=0$, we have $m\in K$. Now, suppose that $\alpha\neq 0$, so $x=0$. We can easily see that  $Ma\subseteq K$. Thus $K$ is prime.}
\NL $\mathbf{(3)}$ The ring $R$ has zero divisors. For $a=\left(
\begin{array}{cc}
0 & \;\;1 \\
0 & -1 \\
\end{array}
\right)\in R$, we can take any element of $R$ of the form $b=\left(
\begin{array}{cc}
\alpha & \alpha \\
0 & 0 \\
\end{array}
\right)$, with $\alpha\neq 0$. We have $ab=ba=0$. Hence, $a$ is a divisor of zero.

\end{example}

\section{Triangular matrix modules and polynomial modules}

In this section, we observe the $\sigma$-rigidness of some module extensions, as triangular matrix modules, ordinary polynomial modules and skew polynomial modules. The class of $\sigma$-rigid modules need not be closed under module extensions by Examples \ref{exp2} and \ref{exp vn}.

\bigskip

For a nonnegative integer $n\geq 2$. Consider

$$V_n(M):=\set{\left(%
\begin{array}{cccccc}
  m_0 & m_1 & m_2 & m_3 & \ldots & m_{n-1} \\
  0 & m_0 & m_1 & m_2 & \ldots & m_{n-2} \\
  0 & 0 & m_0 & m_1 & \ldots & m_{n-3}\\
  \vdots & \vdots &\vdots& \vdots & \ddots &\vdots \\
  0 & 0 & 0 & 0 & \ldots & m_1 \\
  0 & 0 & 0 & 0& \ldots & m_0 \\
\end{array}%
\right)\bigm| m_0,m_1,m_2,\cdots,m_{n-1}\in M}$$
and

$$V_n(R):=\set{\left(%
\begin{array}{cccccc}
  a_0 & a_1 & a_2 & a_3 & \ldots & a_{n-1} \\
  0 & a_0 & a_1 & a_2 & \ldots & a_{n-2} \\
  0 & 0 & a_0 & a_1 & \ldots & a_{n-3}\\
  \vdots & \vdots &\vdots& \vdots & \ddots &\vdots \\
  0 & 0 & 0 & 0 & \ldots & a_1 \\
  0 & 0 & 0 & 0& \ldots & a_0 \\
\end{array}%
\right)\bigm| a_0,a_1,a_2,\cdots,a_{n-1}\in R}$$

\bigskip

We denote elements of $V_n(R)$ by $(a_0,a_1,\cdots,a_{n-1})$, and elements of $V_n(M)$ by $(m_0,m_1,\cdots,m_{n-1})$. Clearly, $V_n(M)$ is a right $V_n(R)$-module under the usual matrix addition operation and the following scalar product operation.

\smallskip

For $U=(m_0,m_1,\cdots,m_{n-1})\in V_n(M)$ and $A=(a_0,a_1,\cdots,a_{n-1})\in V_n(R)$, $UA=(m_0a_0,m_0a_1+m_1a_0,\cdots,m_0a_{n-1}+m_1a_{n-2}+\cdots+m_{n-1}a_0)\in V_n(M)$. Furthermore, $V_n(M)\cong M[x]/M[x](x^n)$ where $M[x](x^n)$ is a submodule of $M[x]$ generated by $x^n$ and $V_n(R)\cong R[x]/(x^n)$ where $(x^n)$ is an ideal of $R[x]$ generated by $x^n$. The endomorphism $\sigma$ of $R$ can be extended to $V_n(R)$ and $R[x]$, and we will denote it in both cases by $\overline{\sigma}$. From now on, we will freely use the isomorphisms $$V_n(M)\cong M[x]/M[x](x^n)\;\mathrm{and}\;V_n(R)\cong R[x]/(x^n).$$

\begin{proposition}\label{lem1}Let $M_R$ be a module and an integer $n\geq 2$. Then
\NL$(1)$ If $M[x]/M[x](x^n)$ is $\overline{\sigma}$-semicommutative $($as a right $R[x]/(x^n)$-module$)$, then $M_R$ is $\sigma$-semicommutative,
\NL$(2)$ If $M[x]/M[x](x^n)$ is $\overline{\sigma}$-rigid $($as a right $R[x]/(x^n)$-module$)$, then $M_R$ is $\sigma$-rigid.
\end{proposition}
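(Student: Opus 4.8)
The plan is to exploit the constant embeddings $M\hookrightarrow V_n(M)$, $m\mapsto (m,0,\dots,0)$, and $R\hookrightarrow V_n(R)$, $a\mapsto (a,0,\dots,0)$, under the identifications $V_n(M)\cong M[x]/M[x](x^n)$ and $V_n(R)\cong R[x]/(x^n)$ recorded above. The two facts that make everything work are: first, $\overline{\sigma}$ acts componentwise, so $\overline{\sigma}\bigl((a,0,\dots,0)\bigr)=(\sigma(a),0,\dots,0)$; and second, the scalar/ring product of constant tuples is again a constant tuple, with the product formula collapsing to $(m,0,\dots,0)(a,0,\dots,0)=(ma,0,\dots,0)$ because every higher component involves a factor $m_i$ or $a_j$ with $i,j\geq 1$ that vanishes. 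Thus a single equation at the level of $M$ corresponds to a single equation among constant tuples in $V_n$, read off from the $0$th coordinate. I would state this translation once as a preliminary observation and then apply it in each part.

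For $(1)$, I would start from $m\in M$, $a\in R$ with $ma=0$ and put $U=(m,0,\dots,0)\in V_n(M)$, $A=(a,0,\dots,0)\in V_n(R)$, so that $UA=(ma,0,\dots,0)=0$. Since $M[x]/M[x](x^n)$ is $\overline{\sigma}$-semicommutative, this forces $U\,V_n(R)\,\overline{\sigma}(A)=0$. I would then specialize the quantifier ``for all $B\in V_n(R)$'' to the constant tuples $B=(r,0,\dots,0)$, $r\in R$, which gives $U B\,\overline{\sigma}(A)=(mr\sigma(a),0,\dots,0)=0$, hence $mr\sigma(a)=0$ for every $r\in R$, i.e. $mR\sigma(a)=0$. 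This is exactly the definition of $\sigma$-semicommutativity for $M_R$.

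For $(2)$, I would start from $ma\sigma(a)=0$ and again set $U=(m,0,\dots,0)$, $A=(a,0,\dots,0)$. Computing inside $V_n(R)$ first, $A\,\overline{\sigma}(A)=(a\sigma(a),0,\dots,0)$, and then $U\,A\,\overline{\sigma}(A)=(ma\sigma(a),0,\dots,0)=0$. The $\overline{\sigma}$-rigidity of $M[x]/M[x](x^n)$ then yields $UA=0$, that is $(ma,0,\dots,0)=0$, whence $ma=0$. So $M_R$ is $\sigma$-rigid.

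I do not expect a genuine obstacle here: the content of the proposition is the easy implication (the converse, from $M_R$ to the polynomial extension, is precisely what the paper notes fails). The only point requiring care is the bookkeeping behind the preliminary observation, namely checking that $\overline{\sigma}$ is applied componentwise and that the products $UA$ and $UA\overline{\sigma}(A)$ of constant tuples are themselves constant with $0$th entry $ma$ and $ma\sigma(a)$ respectively; once that is verified, both parts reduce to restricting the hypothesis on $V_n$ to constant tuples and reading the $0$th coordinate. I would therefore spend one sentence justifying the componentwise action of $\overline{\sigma}$ and the collapse of the product formula, and keep the rest of the argument as the short specializations described above.
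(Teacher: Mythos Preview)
Your proposal is correct and is essentially identical to the paper's own proof: both embed $m$ and $a$ as the constant tuples $U=(m,0,\dots,0)$ and $A=(a,0,\dots,0)$, use $\overline{\sigma}(A)=(\sigma(a),0,\dots,0)$, and then read off the $0$th coordinate of $UB\overline{\sigma}(A)$ (for part~(1), with $B=(r,0,\dots,0)$) and of $UA\overline{\sigma}(A)$ (for part~(2)). Your extra sentence justifying the componentwise action of $\overline{\sigma}$ and the collapse of the product on constant tuples is a welcome clarification the paper leaves implicit.
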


\begin{proof}Let $U=(m,0,0,\cdots,0)\in V_n(M)$ and $A=(a,0,0,\cdots,0)\in V_n(R)$ with $m\in M_R$ and $a\in R$.
\NL$(1)$ Suppose that $ma=0$. Then $MA=(ma,0,0,\cdots,0)=0$, since $V_n(M)$ is $\overline{\sigma}$-semicommutative, we have $UB\overline{\sigma}(A)=0$ for any $B=(r,0,\cdots,0)\in V_n(R)$ with $r$ be an arbitrary element of $R$. But $UB\overline{\sigma}(A)=(mr\sigma(a),0,\cdots,0)=0$, hence $mr\sigma(a)=0$ for any $r\in R$. Therefore $M_R$ is $\sigma$-semicommutative.
\NL$(2)$ Suppose that $ma\sigma(a)=0$. Then $UA\overline{\sigma}(A)=(ma\sigma(a),0,0,\cdots,0)=0$. Since $V_n(M)_{V_n(R)}$ is $\overline{\sigma}$-rigid, we get $UA=0=(ma,0,0,\cdots,0)$, so $ma=0$. Therefore $M_R$ is $\sigma$-rigid.
\end{proof}

\begin{corollary}\label{cor8}Let $M_R$ be a module and $n\geq 2$ an integer. If $M[x]/M[x](x^n)$ is $\overline{\sigma}$-reduced $($as a right $R[x]/(x^n)$-module$)$, then $M_R$ is $\sigma$-reduced.
\end{corollary}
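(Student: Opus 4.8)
The plan is to obtain this corollary as an immediate consequence of Proposition \ref{lem1}, using the characterization of $\sigma$-reduced modules recalled in the introduction. First I would invoke \cite[Theorem 2.16]{generalized/rigid}, which states that a module is $\sigma$-reduced if and only if it is simultaneously $\sigma$-semicommutative and $\sigma$-rigid. This equivalence is a general fact about a module over a ring equipped with an endomorphism, so it applies verbatim in two places: to $M_R$ over $R$ with $\sigma$, and to $M[x]/M[x](x^n)$ over $R[x]/(x^n)$ with the extended endomorphism $\overline{\sigma}$.

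With that in hand the argument is short. Assuming $M[x]/M[x](x^n)$ is $\overline{\sigma}$-reduced, the equivalence applied to this module gives that it is both $\overline{\sigma}$-semicommutative and $\overline{\sigma}$-rigid. I would then feed each of these into the corresponding part of Proposition \ref{lem1}: part $(1)$ yields that $M_R$ is $\sigma$-semicommutative, and part $(2)$ yields that $M_R$ is $\sigma$-rigid. Finally, applying the equivalence of \cite[Theorem 2.16]{generalized/rigid} in the reverse direction, now to $M_R$, produces the desired conclusion that $M_R$ is $\sigma$-reduced.

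There is no genuine obstacle here: the corollary is essentially the ``reduced'' refinement of Proposition \ref{lem1}, and all the substantive work has already been done in proving that proposition. The only point deserving a moment's attention is making sure the extended endomorphism $\overline{\sigma}$ behaves compatibly under the isomorphism $V_n(R)\cong R[x]/(x^n)$, so that both applications of the characterization are legitimate; but this compatibility is already fixed by the construction of $\overline{\sigma}$ described at the start of the section, so it requires no extra verification.
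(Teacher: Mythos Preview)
Your proposal is correct and is precisely the intended argument: the paper states this as an immediate corollary of Proposition \ref{lem1} with no separate proof, relying on the characterization from \cite[Theorem 2.16]{generalized/rigid} recalled in the introduction. There is nothing to add.
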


\begin{corollary}[{\cite[Proposition 2.14]{agayev2007}}] If the ring $R[x]/(x^n)$ is semicommutative for any integer $n\geq 2$. Then R is a semicommutative ring.
\end{corollary}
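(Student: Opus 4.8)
The plan is to specialize Proposition \ref{lem1}(1) to the regular module $R_R$ with $\sigma = id_R$. First I would recall the standard dictionary between ring-theoretic and module-theoretic semicommutativity: a ring $S$ is semicommutative precisely when the regular module $S_S$ is a semicommutative module, since the module condition ``$ma=0 \Rightarrow mRa=0$'' applied to $M=S_S$ reads ``$ab=0 \Rightarrow aSb=0$'', which is exactly the defining property of a semicommutative ring (equivalently, $id_S$-semicommutativity of $S_S$ in the sense of Definition following \cite{zhang/chen}).

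Next I would set $M = R_R$ and $\sigma = id_R$, so that the extended endomorphism $\overline{\sigma}$ is the identity of both $R[x]$ and $V_n(R)$. Under the identifications $V_n(R) \cong R[x]/(x^n)$ and $V_n(M) \cong M[x]/M[x](x^n)$ recorded above, taking $M=R_R$ gives $M[x]/M[x](x^n) \cong R[x]/(x^n)$ as a right $R[x]/(x^n)$-module, that is, as the regular module over the ring $R[x]/(x^n)$. By the dictionary of the previous step, the hypothesis that $R[x]/(x^n)$ is a semicommutative ring (for some, hence any, $n \geq 2$) says precisely that this regular module is semicommutative; and since $\overline{\sigma} = id$, it is $\overline{\sigma}$-semicommutative as a right $R[x]/(x^n)$-module.

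Finally I would invoke Proposition \ref{lem1}(1): its conclusion yields that $M_R = R_R$ is $\sigma$-semicommutative, i.e. $id_R$-semicommutative, which is to say semicommutative as a module. Applying the dictionary once more, now to the ring $R$ itself, gives that $R$ is a semicommutative ring, as desired. There is no real obstacle here; the only point requiring a moment's care is verifying that $\overline{id_R}$ is genuinely the identity on the coefficient-extended ring and that the module isomorphism $V_n(M) \cong M[x]/M[x](x^n)$ is compatible with the action through $V_n(R) \cong R[x]/(x^n)$, so that the regular module of $R[x]/(x^n)$ is exactly the instance of $M[x]/M[x](x^n)$ produced by $M = R_R$.
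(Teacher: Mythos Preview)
Your argument is correct and is exactly the specialization the paper intends: the corollary is placed immediately after Proposition~\ref{lem1} with no separate proof, so it is meant to follow by taking $M=R_R$ and $\sigma=id_R$ in part~(1), together with the observation that a ring is semicommutative if and only if its regular right module is.
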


\par All reduced rings are Armendariz. There are non-reduced Armendariz rings. For a positive integer $n$, let $\Z/n\Z$ denotes ring of integers modulo $n$. For each positive integer $n$, the ring $\Z/n\Z$ is Armendariz but not reduced whenever $n$ is not square free  \cite[Proposition 2.1]{chhawchharia}.

\begin{corollary}\label{cor4} If $M[x]/M[x](x^n)$ is $\overline{\sigma}$-reduced as a right $R[x]/(x^n)$-module, then $M_R$ is $\sigma$-skew Armendariz.
\end{corollary}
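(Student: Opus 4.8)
The plan is to derive the statement from Corollary \ref{cor8} together with the fact that $\sigma$-reduced modules are $\sigma$-skew Armendariz. First I would feed the hypothesis into Corollary \ref{cor8}: since $M[x]/M[x](x^n)$ is $\overline{\sigma}$-reduced as a right $R[x]/(x^n)$-module, $M_R$ is $\sigma$-reduced. What then remains, and where the real content lies, is the implication that a $\sigma$-reduced module is $\sigma$-skew Armendariz.

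Before the main argument I would record the consequences of $\sigma$-reducedness to be used, all obtained from the definition and from the characterization that a module is $\sigma$-reduced if and only if it is $\sigma$-semicommutative and $\sigma$-rigid: (a) compatibility, $ma=0 \iff m\sigma^t(a)=0$, together with its module-element form $wb=0 \iff w\sigma^t(b)=0$ for $w\in M$; (b) the semicommutative consequence $ma=0 \Rightarrow mRa=0$, coming from $\sigma$-semicommutativity ($mR\sigma(a)=0$) followed by (a); and (c) the rigidity consequence $ma^2=0 \Rightarrow ma=0$, obtained by rewriting $ma^2$ as $ma\sigma(a)$ via (a) and then invoking $\sigma$-rigidity. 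Note that (a) already shows $m_i\sigma^i(a_j)=0 \iff m_ia_j=0$, so the target reduces to proving $m_ia_j=0$ for all $i,j$.

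Next, given $m(x)=\sum_i m_ix^i$ and $f(x)=\sum_{j=0}^{p}a_jx^j$ with $m(x)f(x)=0$, I would extract the coefficient equations $E_k:\ \sum_{i+j=k}m_i\sigma^i(a_j)=0$ and prove $m_ia_j=0$ for all $i,j$ by induction on $p=\deg f$. The engine is the inner claim that $m_ia_0=0$ for every $i$, itself proved by induction on $i$: $E_0$ gives $m_0a_0=0$; assuming $m_la_0=0$ for all $l<i$, I would right-multiply $E_i$ by $a_0$, annihilate each term with $l<i$ using (b), and be left with $m_i\sigma^i(a_0)a_0=0$. Applying the module-element form of (a) to $w=m_i\sigma^i(a_0)$ turns this into $m_i\sigma^i(a_0^2)=0$, hence $m_ia_0^2=0$ by (a), hence $m_ia_0=0$ by (c). Once $m_ia_0=0$ (equivalently $m_i\sigma^i(a_0)=0$) holds for all $i$, every $E_k$ loses its $j=0$ term, and the remaining equations are precisely the coefficient equations of $m(x)f'(x)x=0$ with $f'(x)=a_1+a_2x+\cdots+a_px^{p-1}$. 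Cancelling the right factor $x$ yields $m(x)f'(x)=0$, and the induction hypothesis applied to $\deg f'=p-1$ completes the argument; the base case $p=0$ is immediate from $m(x)a_0=0$.

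The main obstacle I expect is the bookkeeping of the $\sigma^i$-twists that decorate every coefficient of the skew product: the peeling step produces $m_i\sigma^i(a_0)a_0$ rather than a clean square, so converting it to $m_ia_0=0$ is exactly where compatibility (used both to move twists onto the second factor and, in its module-element form, on $w=m_i\sigma^i(a_0)$) must be combined with rigidity. A secondary point to verify carefully is the shift $f\mapsto f'$: that it genuinely lowers the degree and that right multiplication by $x$ is injective on coefficients, so that $m(x)f'(x)x=0$ really gives $m(x)f'(x)=0$.
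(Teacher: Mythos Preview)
Your proposal is correct and follows the same route as the paper: invoke Corollary~\ref{cor8} to get that $M_R$ is $\sigma$-reduced, then use the implication ``$\sigma$-reduced $\Rightarrow$ $\sigma$-skew Armendariz.'' The only difference is that the paper dispatches this last implication in one line by citing \cite[Theorem~2.20]{agayev2/2009}, whereas you supply a self-contained inductive proof of it; your argument (compatibility to untwist, semicommutativity to peel, rigidity to cancel squares) is sound and is essentially how that cited theorem is established.
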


\begin{proof} If $M_R$ is $\sigma$-reduced then it is $\sigma$-skew Armendariz (see \cite[Theorem 2.20]{agayev2/2009}).
\end{proof}

Note that, the converse of Corollary \ref{cor4} need not be true, since $M=\Z/n\Z$ is Armendariz but not reduced ($n$ is not square free) and so $V_n(M)$ is not reduced, by Corollary \ref{cor8}. Furthermore, the converse of Proposition \ref{lem1} and Corollary \ref{cor8} may not be true by the next examples. For the first one, if $M_R$ is $\sigma$-semicommutative then the right $R[x]/(x^n)$-module $M[x]/M[x](x^n)$ need not to be $\overline{\sigma}$-semicommutative.

\begin{example}[{\cite[Example 2.9]{baser/2008}}]\label{exp3} Consider the ring $R=\set{\begin{pmatrix}
  a & b \\
  0 & a
\end{pmatrix}| a,b\in \Z}$, with an endomorphism $\sigma$ defined by $\sigma\left(\begin{pmatrix}
  a & b \\
  0 & a
\end{pmatrix}\right)=\begin{pmatrix}
  a & -b \\
  0 & \;\;a
\end{pmatrix}$. The module $R_R$ is $\sigma$-semicommutative by {\cite[Example 2.5(1)]{baser/2008}}. Take $$A=\left(
                          \begin{array}{cc}
                            \left(
                              \begin{array}{cc}
                                0 & 1 \\
                                0 & 0 \\
                              \end{array}
                            \right)
                             & \left(
                                 \begin{array}{cc}
                                   -1 & \;\;1 \\
                                   \;\;0 & -1 \\
                                 \end{array}
                               \right)
                              \\
                            \left(
                              \begin{array}{cc}
                                0 & 0 \\
                                0 & 0 \\
                              \end{array}
                            \right)
                             & \left(
                                 \begin{array}{cc}
                                   0 & 1 \\
                                   0 & 0 \\
                                 \end{array}
                               \right)
                              \\
                          \end{array}
                        \right),\;B=\left(
                          \begin{array}{cc}
                            \left(
                              \begin{array}{cc}
                                0 & 1 \\
                                0 & 0 \\
                              \end{array}
                            \right)
                             & \left(
                                 \begin{array}{cc}
                                   1 & 1 \\
                                   0 & 1 \\
                                 \end{array}
                               \right)
                              \\
                            \left(
                              \begin{array}{cc}
                                0 & 0 \\
                                0 & 0 \\
                              \end{array}
                            \right)
                             & \left(
                                 \begin{array}{cc}
                                   0 & 1 \\
                                   0 & 0 \\
                                 \end{array}
                               \right)
                              \\
                          \end{array}
                        \right)\in V_2(R).$$
We have $AB=0$, but for $C=\left(
                          \begin{array}{cc}
                            \left(
                              \begin{array}{cc}
                                1 & 0 \\
                                0 & 1 \\
                              \end{array}
                            \right)
                             & \left(
                                 \begin{array}{cc}
                                   0 & 0 \\
                                   0 & 0 \\
                                 \end{array}
                               \right)
                              \\
                            \left(
                              \begin{array}{cc}
                                0 & 0 \\
                                0 & 0 \\
                              \end{array}
                            \right)
                             & \left(
                                 \begin{array}{cc}
                                   1 & 0 \\
                                   0 & 1 \\
                                 \end{array}
                               \right)
                              \\
                          \end{array}
                        \right)\in V_2(R)$, we get $AC\overline{\sigma}(B)=\left(
                          \begin{array}{cc}
                            \left(
                              \begin{array}{cc}
                                0 & 0 \\
                                0 & 0 \\
                              \end{array}
                            \right)
                             & \left(
                                 \begin{array}{cc}
                                   0 & 2 \\
                                   0 & 0 \\
                                 \end{array}
                               \right)
                              \\
                            \left(
                              \begin{array}{cc}
                                0 & 0 \\
                                0 & 0 \\
                              \end{array}
                            \right)
                             & \left(
                                 \begin{array}{cc}
                                   0 & 0 \\
                                   0 & 0 \\
                                 \end{array}
                               \right)
                              \\
                          \end{array}
                        \right)\neq 0$. Thus, $V_2(R)$ is not $\overline{\sigma}$-semicommutative as a right $V_2(R)$-module.
\end{example}

\smallskip

Now, if $M_R$ is $\sigma$-reduced (respectively, $\sigma$-rigid) then the right $R[x]/(x^n)$-module $M[x]/M[x](x^n)$ is not $\overline{\sigma}$-reduced (respectively, $\overline{\sigma}$-rigid).

\smallskip

\begin{example}\label{exp2}Let $\K$ be a field, then $\K$ is reduced as a right $\K$-module. Consider the polynomial $p=\overline{x}\in \K[x]/x^2\K[x]$. We have $p\neq \overline{0}$, because $x\not\in x^2\K[x]$ but $p^2=\overline{0}$. Therefore, $\K[x]/x^2\K[x]\simeq V_2(\K)$ is not reduced as a right $V_2(\K)$-module
\end{example}

\begin{example}\label{exp vn}Let $R=\K\langle x,y\rangle$ be the ring of polynomials in two non-commuting indeterminates over a field $\K$. Consider the right $R$-module $M=R/xR$. The module $M$ is rigid but not semicommutative by \cite[Example 2.18]{generalized/rigid}.
\NL Now, for $U=\left(
\begin{array}{cc}
1+xR & xR \\
0 & 1+xR \\
\end{array}
\right)\in V_2(M)$ and
$A=\left(
\begin{array}{cc}
0 & 1 \\
0 & 0 \\
\end{array}
\right)\in V_2(R)$. We have $UA^2=0$, however $UA=\left(
\begin{array}{cc}
0 & 1+xR \\
0 & 0 \\
\end{array}
\right)\neq 0$. Thus $V_2(M)$ is not rigid as a right $V_2(R)$-module.
\end{example}

\bigskip

Let $n\geq 2$ be an integer, we have:

$$
\begin{array}{c}
  V_n(M)\; is \;\overline{\sigma}-rigid \\
   \not\Uparrow\;\;\Downarrow  \\
 M_R\; is \;\sigma-rigid \\
  \not\Downarrow\;\;\Uparrow \\
 M[x]\; is \;\overline{\sigma}-rigid
\end{array}
$$

\bigskip

\begin{question}Under which conditions we have $M_R$ is $\sigma$-rigid $\Rightarrow$ $V_n(M)$ is $\overline{\sigma}$-rigid?
\end{question}

\begin{proposition}\label{prop4}Let $n\geq 2$ be an integer and $M_R$ a $\sigma$-rigid module. If $ M_R$ is $\sigma$-semicommutative, then $M[x]/M[x](x^n)$ is $\overline{\sigma}$-semicommutative as a right $R[x]/(x^n)$-module.
\end{proposition}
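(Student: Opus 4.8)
The plan is to reduce the $\overline{\sigma}$-semicommutativity of $V_n(M)\cong M[x]/M[x](x^n)$ to a coefficientwise (Armendariz-type) vanishing statement, and then feed that into the $\sigma$-semicommutativity of $M_R$. First I would collect the properties of $M_R$ that I actually need. Since $M_R$ is both $\sigma$-rigid and $\sigma$-semicommutative, it is $\sigma$-reduced by \cite[Theorem 2.16]{generalized/rigid}, and in particular $\sigma$-compatible. By Lemma \ref{lem2}(3) the $\sigma$-notions coincide with the ordinary ones under $\sigma$-compatibility, so $M_R$ is also rigid and semicommutative, hence reduced. Consequently I may use the two facts $ma=0\Rightarrow mRa=0$ (semicommutativity), $ma^2=0\Rightarrow mRa=0$ (the reduced characterization recalled in the Introduction), together with the hypothesis $ma=0\Rightarrow mR\sigma(a)=0$ ($\sigma$-semicommutativity).

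Under the identification $V_n(M)\cong M[x]/M[x](x^n)$, an element $U=(m_0,\dots,m_{n-1})$ is $m(x)=\sum_i m_ix^i$, the product $UA$ is the ordinary polynomial product truncated mod $x^n$, and $\overline{\sigma}$ acts coefficientwise. Thus $UA=0$ says exactly that $c_k:=\sum_{i+j=k}m_ia_j=0$ for all $0\le k\le n-1$. The central step is the truncated Armendariz property: if $UA=0$ then $m_ia_j=0$ for every pair with $i+j\le n-1$. I would prove this by strong induction on the index-sum $s=i+j$. The base case $s=0$ is $c_0=m_0a_0=0$. For the inductive step I would start from $c_s=0$ and peel off its terms one at a time, multiplying $c_s$ on the right successively by $a_0,a_1,a_2,\dots$: at stage $t$, semicommutativity kills every product $m_pa_{s-p}a_t$ with $p+t<s$ (since $m_pa_t=0$ by the inductive hypothesis), leaving the single square term $m_{s-t}a_t^2=0$, whence $m_{s-t}a_t=0$ by the reduced characterization. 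Running $t$ from $0$ to $s$ disposes of all terms of $c_s$. The point to stress is that this argument refers only to the coefficients $c_0,\dots,c_{n-1}$, so truncation at $x^n$ causes no loss.

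With the coefficientwise vanishing in hand the conclusion is immediate. For an arbitrary $B=(b_0,\dots,b_{n-1})\in V_n(R)$, the coefficient of $x^k$ in $UB\overline{\sigma}(A)$ is $\sum_{i+l+j=k}m_ib_l\sigma(a_j)$. For each summand with $k\le n-1$ one has $i+j\le k\le n-1$, hence $m_ia_j=0$; $\sigma$-semicommutativity then gives $m_iR\sigma(a_j)=0$, so $m_ib_l\sigma(a_j)=0$. Every coefficient of $UB\overline{\sigma}(A)$ up to degree $n-1$ therefore vanishes, i.e. $UB\overline{\sigma}(A)=0$ for all $B$, which is precisely $\overline{\sigma}$-semicommutativity of $V_n(M)$.

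I expect the main obstacle to be the inductive peeling argument: one must order the right-multiplications correctly and verify at each stage that every discarded product has index-sum strictly below $s$ (so the inductive hypothesis and semicommutativity apply) while the surviving term is a genuine square $m_{s-t}a_t^2$ to which the reduced characterization applies. Everything else is routine bookkeeping with the truncated convolution, and it is essential that both hypotheses on $M_R$ are used — reducedness to get the coefficientwise splitting and $\sigma$-semicommutativity to reinsert the factor $b_l$ and the twist $\sigma$ — consistent with Examples \ref{exp3}, \ref{exp2} and \ref{exp vn} showing that neither property alone suffices.
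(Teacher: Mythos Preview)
Your proof is correct and follows the same architecture as the paper's: reduce $UA=0$ to the coefficientwise vanishing $m_ia_j=0$ for $i+j\le n-1$ via a triangular induction on $i+j$, then use $\sigma$-semicommutativity to absorb the arbitrary factor $B$ and the twist $\sigma$ in $UB\overline{\sigma}(A)$.

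The only difference is in how you run the peeling step. The paper works directly with the hypotheses: it multiplies the $s$-th equation on the right by $\sigma(a_0),\sigma(a_1),\dots$, uses $\sigma$-semicommutativity ($m_pa_t=0\Rightarrow m_pR\sigma(a_t)=0$) to kill the lower terms, and then $\sigma$-rigidity ($m_{s-t}a_t\sigma(a_t)=0\Rightarrow m_{s-t}a_t=0$) to extract the remaining factor. You instead first pass through $\sigma$-compatibility (via \cite[Theorem~2.16]{generalized/rigid} and Lemma~\ref{lem2}(3)) to conclude that $M_R$ is reduced and semicommutative in the ordinary sense, and then multiply by $a_0,a_1,\dots$ without the twist, using plain semicommutativity and reducedness. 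Both versions work; the paper's is slightly more direct since it avoids the compatibility detour, while yours makes transparent that the inductive core is just the classical ``reduced $\Rightarrow$ Armendariz'' argument, with the $\sigma$ only reappearing at the final step.
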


\begin{proof} Let $U=(m_0,m_1,\cdots,m_{n-1})\in V_n(M)$ and $A=(a_0,a_1,\cdots,a_{n-1}),$ $B=(b_0,b_1,\cdots,b_{n-1})\in V_n(R)$ such that $UA=0$. We will show $UB\overline{\sigma}(A)=0$. We have
$UA=(m_0a_0,m_0a_1+m_1a_0,\cdots,m_0a_{n-1}+\cdots+m_{n-1}a_0)$, $UB=(m_0b_0,m_0b_1+m_1b_0,\cdots,m_0b_{n-1}+\cdots+m_{n-1}b_0)=(\beta_0,\beta_1,\cdots,\beta_{n-1})$ and
$UB\overline{\sigma}(A)=(\beta_0\sigma{(a_0)},\beta_0\sigma{(a_1)}+\beta_1\sigma{(a_0)},\cdots, \beta_0\sigma{(a_{n-1})}+\beta_1\sigma{(a_{n-2})}+\cdots+\beta_{n-1}\sigma(a_0))$. From $UA=0$, we get the following system of equations.
$$\;0=m_0a_0\qquad\qquad\qquad\qquad\qquad\qquad\quad\leqno(0)$$
$$=m_0a_1+m_1a_0\qquad\qquad\qquad\qquad\;\;\leqno(1)$$
$$\vdots\qquad\qquad\qquad\qquad$$
$$\;=m_0a_{n-1}+m_1a_{n-2}+\cdots+m_{n-1}a_0\leqno(n-1)$$

\NL Equation (0) implies $m_0a_0=0$, then $m_0R\sigma(a_0)=0$. Multiplying Equation (1) on the right side by $\sigma(a_0)$, we get $m_0a_1\sigma(a_0)+m_1a_0\sigma(a_0)=0\;\;(1')$. Since $m_0a_1\sigma(a_0)=0$ because $m_0R\sigma(a_0)=0$, we have $m_1a_0\sigma(a_0)=0$. But $M_R$ is $\sigma$-rigid then $m_1a_0=0$. From Equation (1) we have also $m_0a_1=0$. For both cases, we have $m_1R\sigma(a_0)=m_0R\sigma(a_1)=0$. Summarizing at this point: $m_0a_0=m_0a_1=m_1a_0=0$ and $m_0R\sigma(a_0)=m_0R\sigma(a_1)=m_1R\sigma(a_0)=0$. Suppose that the result is true until $i$, multiplying Equation (i+1) on the right side by $\sigma(a_0)$, we get
$$0=m_0a_{i+1}\sigma(a_0)+m_1a_{i}\sigma(a_0)+\cdots+m_{i}a_1\sigma(a_0)+m_{i+1}a_0\sigma(a_0)\leqno(i+1)'$$
By the inductive hypothesis, we have $$m_0a_{i+1}\sigma(a_0)=m_1a_{i}\sigma(a_0)=\cdots=m_{i}a_1\sigma(a_0)=0.$$
Then $m_{i+1}a_0\sigma(a_0)=0$, which gives $m_{i+1}a_0=0$ and so $m_{i+1}R\sigma(a_0)=0$. Continuing with the same manner by multiplying Equation $(i+1)'$ on the right side, respectively by $\sigma(a_1),\sigma(a_2),\cdots,\sigma(a_{i})$ we get $m_ka_{i+1-k}=0$ for all $k=0,1,\cdots, i+1$. Hence, we get the result for $i+1$. That is $m_ia_j=0$, and so $m_iR\sigma(a_j)=0$ for all integers $i$ and $j$ with $i+j\leq n-1$. Therefore, all components of $UB\overline{\sigma}(A)$ are zero. Thus $V_n(M)$ is $\overline{\sigma}$-semicommutative.
\end{proof}

\begin{corollary}\label{cor5}$(1)$ If $M_R$ is $\sigma$-reduced then $M[x]/M[x](x^n)$ is $\overline{\sigma}$-semicommutative as a right $R[x]/(x^n)$-module.
\NL$(2)$ If $M_R$ is reduced then $M[x]/M[x](x^n)$ is semicommutative as a right $R[x]/(x^n)$-module.
\NL$(2)$ If $R$ is reduced then $R[x]/(x^n)$ is semicommutative.
\end{corollary}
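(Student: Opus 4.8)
The plan is to derive all three statements from Proposition \ref{prop4}, whose hypotheses are precisely that the module be $\sigma$-rigid and $\sigma$-semicommutative. The bridge is the characterization recalled in the introduction, namely \cite[Theorem 2.16]{generalized/rigid}: a module $M_R$ is $\sigma$-reduced if and only if it is simultaneously $\sigma$-semicommutative and $\sigma$-rigid. Once this equivalence is in hand, each part of the corollary is a specialization of the proposition.

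For the first statement I would argue as follows. If $M_R$ is $\sigma$-reduced, then by the cited characterization $M_R$ is both $\sigma$-rigid and $\sigma$-semicommutative. These are exactly the hypotheses of Proposition \ref{prop4}, so that proposition immediately yields that $M[x]/M[x](x^n)$ is $\overline{\sigma}$-semicommutative as a right $R[x]/(x^n)$-module, with nothing further to check.

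For the second statement I would specialize to $\sigma=id_R$. By definition a reduced module is an $id_R$-reduced module, so the first part applies verbatim with $\overline{\sigma}=\overline{id_R}$; since $\overline{id_R}$ is the identity endomorphism of $R[x]/(x^n)$, being $\overline{id_R}$-semicommutative is just ordinary semicommutativity, and we conclude that $M[x]/M[x](x^n)$ is semicommutative as a right $R[x]/(x^n)$-module.

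For the third statement I would further specialize to the regular module $M=R_R$ together with $\sigma=id_R$. A reduced ring $R$ is exactly a ring for which $R_R$ is a reduced module, so the second part gives that $R[x]/(x^n)\cong M[x]/M[x](x^n)$ is semicommutative as a right $R[x]/(x^n)$-module. The only point worth a remark is that for a ring $S$ the semicommutativity of the regular module $S_S$ coincides with ring semicommutativity of $S$ (both assert that $ab=0$ implies $aSb=0$); applying this with $S=R[x]/(x^n)$ converts the module statement into the ring statement. No step here presents a genuine obstacle: the substantive work is entirely contained in Proposition \ref{prop4}, and the corollary reduces to correctly invoking the $\sigma$-reduced characterization and tracking the two specializations $\sigma=id_R$ and $M=R_R$.
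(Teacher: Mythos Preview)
Your argument is correct and matches the paper's intended approach: the corollary is stated immediately after Proposition~\ref{prop4} with no proof, precisely because each part follows by combining that proposition with the characterization \cite[Theorem 2.16]{generalized/rigid} of $\sigma$-reduced modules and then specializing $\sigma=id_R$ and $M=R_R$ as you do.
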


\begin{corollary}[{\cite[Theorem 3.9]{zhang/chen}}]\label{cor6}$(1)$ If $M_R$ is a $\sigma$-rigid module. Then $M_R$ is $\sigma$-semicommutative if and only if $M[x]/M[x](x^n)$ is $\overline{\sigma}$-semicommutative as a right $R[x]/(x^n)$-module.
\NL$(2)$ If $M_R$ is a rigid module. Then $M_R$ is semicommutative if and only if $M[x]/M[x](x^n)$ is semicommutative as a right $R[x]/(x^n)$-module.
\end{corollary}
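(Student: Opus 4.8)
The plan is to recognize this corollary as an immediate consequence of the two main results already established in this section, namely Proposition \ref{lem1}(1) and Proposition \ref{prop4}, with part (2) obtained as the specialization $\sigma = id_R$. No new computation should be required; the genuine content lives entirely in Proposition \ref{prop4}.

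For part (1), I would dispatch the two implications separately. For the backward direction ($\Leftarrow$), assume $M[x]/M[x](x^n)$ is $\overline{\sigma}$-semicommutative as a right $R[x]/(x^n)$-module; then Proposition \ref{lem1}(1) gives at once that $M_R$ is $\sigma$-semicommutative, and I would note that this implication does not even use the standing hypothesis that $M_R$ is $\sigma$-rigid. For the forward direction ($\Rightarrow$), assume $M_R$ is $\sigma$-semicommutative. Combining this with the hypothesis that $M_R$ is $\sigma$-rigid, the two hypotheses of Proposition \ref{prop4} are met verbatim, so I conclude that $M[x]/M[x](x^n)$ is $\overline{\sigma}$-semicommutative. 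The two implications together yield the stated equivalence.

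For part (2), I would observe that ``rigid'' is by definition ``$id_R$-rigid'', and that a module is semicommutative exactly when it is $id_R$-semicommutative, since the condition $ma=0 \Rightarrow mR\,id_R(a)=0$ is precisely $ma=0 \Rightarrow mRa=0$; moreover the extended endomorphism $\overline{id_R}$ is the identity on $R[x]/(x^n)$, so $\overline{id_R}$-semicommutativity of the quotient module is plain semicommutativity. Hence part (2) is nothing but part (1) read with $\sigma = id_R$, and no separate argument is needed. I do not expect a real obstacle here; the only point deserving minor care is this bookkeeping in part (2), namely verifying that the unadorned notions of rigidity and semicommutativity coincide with their $id_R$-versions so that the specialization of part (1) is legitimate.
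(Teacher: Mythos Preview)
Your proposal is correct and matches the paper's own proof, which simply reads ``It follows from Propositions \ref{lem1}(1) and \ref{prop4}.'' Your additional remark that part (2) is the specialization $\sigma = id_R$ of part (1) is accurate and fills in the only detail the paper leaves implicit.
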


\begin{proof} It follows from Propositions \ref{lem1}(1) and \ref{prop4}.
\end{proof}

\begin{corollary}Let $n\geq 2$ be an integer. If $M[x]/M[x](x^n)$ is Armendariz as a right $R[x]/(x^n)$-module, then $M[x]/M[x](x^n)$  is semicommutative as a right $R[x]/(x^n)$-module.
\end{corollary}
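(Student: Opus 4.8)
The plan is to show that the Armendariz hypothesis on $V_n(M)=M[x]/M[x](x^n)$ forces the base module $M_R$ to be \emph{reduced}, and then to quote Corollary \ref{cor5}(2), which already records that a reduced $M_R$ makes $M[x]/M[x](x^n)$ semicommutative over $R[x]/(x^n)$. Since by \cite[Theorem 2.16]{generalized/rigid} (taken with $\sigma=id_R$) a module is reduced precisely when it is simultaneously rigid and semicommutative, it suffices to extract \emph{both} of these properties of $M_R$ from the Armendariz property of $V_n(M)$. The whole argument rests on a single gadget: the element $t:=\overline{x}^{\,n-1}\in V_n(R)$, i.e. the tuple $(0,0,\dots,0,1)$. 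Because $n\geq 2$ one has $t^2=\overline{x}^{\,2n-2}=0$; the element $t$ is central (here $\sigma=id_R$, so $\overline{x}$ is central); and for $w\in M$ the element $wt=(0,\dots,0,w)\in V_n(M)$ vanishes if and only if $w=0$. This last point is what turns a forced identity of the shape ``$(\,\cdot\,)t=0$'' into an identity in $M$.

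For rigidity, suppose $ma^{2}=0$ with $m\in M$, $a\in R$. I would feed the Armendariz hypothesis the two linear polynomials
\[
U(y)=mt+(ma)\,y\in V_n(M)[y],\qquad A(y)=t-a\,y\in V_n(R)[y].
\]
A direct check gives $U(y)A(y)=0$: the constant term is $mt\cdot t=mt^{2}=0$, the $y^{2}$-term is $(ma)(-a)=-ma^{2}=0$, and the middle term is $-(ma)t+(ma)t=0$ (using that $t$ is central). Armendariz then forces every coefficient product to vanish, in particular $(ma)\,t=0$, whence $ma=0$. For semicommutativity, suppose $ma=0$ and fix $r\in R$; this time I would use
\[
U(y)=m-(mr)t\,y\in V_n(M)[y],\qquad A(y)=a+(ra)t\,y\in V_n(R)[y].
\]
Now the constant term is $ma=0$, the $y^{2}$-term is $-(mr)(ra)\,t^{2}=0$, and the middle term is $(mra)t-(mra)t=0$. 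Armendariz forces $m\cdot(ra)t=(mra)t=0$, hence $mra=0$; as $r$ is arbitrary, $mRa=0$. Together these two conclusions say $M_R$ is reduced, and Corollary \ref{cor5}(2) then finishes the proof.

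The main obstacle is the design of the second (semicommutativity) pair of polynomials: a naive choice makes the top-degree ($y^{2}$) coefficient equal to something like $mr^{2}a$ or $mra^{2}$, which the hypothesis does not control, so the product fails to be zero and the Armendariz conclusion becomes unavailable. The trick that removes this is to insert the square-zero element $t$ into the degree-one coefficients on \emph{both} sides, so that the $y^{2}$-term automatically carries the factor $t^{2}=0$, while the surviving forced product $(mra)t$ still detects $mra$. One should also verify the routine points that $\{1,\overline{x},\dots,\overline{x}^{\,n-1}\}$ are $M$-independent in $V_n(M)$ (so that $(mra)t=0$ genuinely yields $mra=0$) and that all computations are carried out with $\sigma=id_R$, i.e. with $\overline{x}$ truly central. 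The statement is, in effect, the module-theoretic analogue of the Anderson--Camillo theorem that $R[x]/(x^n)$ is Armendariz if and only if $R$ is reduced.
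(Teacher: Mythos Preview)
Your proof is correct and follows the same overall strategy as the paper: deduce that $M_R$ is reduced from the Armendariz hypothesis on $M[x]/M[x](x^n)$, and then invoke Corollary~\ref{cor5}(2). The only difference is that the paper dispatches the first step in one line by citing \cite[Theorem 1.9]{lee/zhou} (the module-theoretic analogue of the Anderson--Camillo theorem you mention at the end), whereas you give a self-contained argument using the nilpotent central element $t=\overline{x}^{\,n-1}$ to extract rigidity and semicommutativity separately. Your argument is thus a direct reproof of the relevant implication of \cite[Theorem 1.9]{lee/zhou}; the approaches are not genuinely different, but yours has the merit of being independent of that external reference.
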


\begin{proof} It follows from \cite[Theorem 1.9]{lee/zhou} and Corollary \ref{cor5}(2).
\end{proof}

For $U=(m_0,m_1,\cdots,m_{n-1})\in V_n(M),\;A=(a_0,a_1,\cdots,a_{n-1})\in V_n(R)$.
\NL Consider $\alpha_i\in V_n(M)$, such that $\alpha_i=m_0a_i+m_1a_{i-1}+\cdots+m_ia_0$ for all $i\in\{0,1,\cdots,n-1\}$. We have $UA=(\alpha_0,\alpha_1,\cdots,\alpha_{n-1})$ and $UA\overline{\sigma}(A)=(\alpha_0\sigma(a_0),\alpha_0\sigma(a_1)+\alpha_1\sigma(a_0),\cdots,\alpha_0\sigma(a_{n-1})+\alpha_1\sigma(a_{n-2})+\cdots+\alpha_{n-1}\sigma(a_0))$.

\begin{proposition}\label{prop6}Let $n\geq 2$ be an integer and $M_R$ a $\sigma$-reduced module. If $\;UA\overline{\sigma}(A)=0$ then $\alpha_i\sigma(a_j)=0$ for all nonnegative integers $i,j$ with $i+j=0,1,\cdots,n-1$.
\end{proposition}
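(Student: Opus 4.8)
\emph{Proof idea.} The statement concerns only the system of $n$ scalar equations obtained by setting each component of $UA\overline{\sigma}(A)$ equal to zero, namely $\sum_{i+j=k}\alpha_i\sigma(a_j)=0$ for $k=0,1,\dots,n-1$; the internal form $\alpha_i=m_0a_i+\cdots+m_ia_0$ plays no role, so I would treat the $\alpha_i$ simply as elements of $M$. The plan is to prove by induction on $k$ that every summand $\alpha_i\sigma(a_j)$ with $i+j=k$ vanishes. The toolkit comes from $M_R$ being $\sigma$-reduced, which by definition is $\sigma$-compatible and, by \cite[Theorem 2.16]{generalized/rigid}, is both $\sigma$-semicommutative and $\sigma$-rigid. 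From these I extract three facts used throughout: (C) $ma=0\Leftrightarrow m\sigma(a)=0$; (S) if $ma=0$ then $mR\sigma(a)=0$, and hence $mRa=0$ by (C); and the squaring fact that $m\sigma(a)^2=0$ forces $m\sigma(a)=0$. The last follows by writing $\sigma(a)^2=\sigma(a^2)$, passing to $ma^2=0$ by (C), then to $ma\sigma(a)=0$ by (C) applied to the element $ma$, then to $ma=0$ by $\sigma$-rigidity, and finally back to $m\sigma(a)=0$ by (C).

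For the induction, the base case $k=0$ is immediate: the zeroth equation reads $\alpha_0\sigma(a_0)=0$. Assume now that $\alpha_i\sigma(a_j)=0$ whenever $i+j\le k-1$. I would peel the $k$-th equation $\alpha_0\sigma(a_k)+\alpha_1\sigma(a_{k-1})+\cdots+\alpha_k\sigma(a_0)=0$ term by term. First multiply it on the right by $\sigma(a_0)$: for each $i\le k-1$ the inductive hypothesis gives $\alpha_i\sigma(a_0)=0$, whence $\alpha_iR\sigma(a_0)=0$ by (C) and (S), so $\alpha_i\sigma(a_{k-i})\sigma(a_0)=0$ since $\sigma(a_{k-i})\in R$; only the diagonal term $\alpha_k\sigma(a_0)^2=0$ survives, and the squaring fact yields $\alpha_k\sigma(a_0)=0$. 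Substituting this back deletes the last summand, and I repeat the procedure. At the general step $t$ (for $t=0,1,\dots,k$) the surviving equation is $\sum_{i=0}^{k-t}\alpha_i\sigma(a_{k-i})=0$; multiplying by $\sigma(a_t)$ kills every term with $i\le k-t-1$ because $\alpha_i\sigma(a_t)=0$ (the pair $(i,t)$ has sum $\le k-1$, covered by the hypothesis), leaving $\alpha_{k-t}\sigma(a_t)^2=0$ and hence $\alpha_{k-t}\sigma(a_t)=0$. Running $t$ from $0$ to $k$ produces $\alpha_i\sigma(a_j)=0$ for every pair with $i+j=k$, completing the induction.

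The main obstacle is not a single deep step but the bookkeeping that keeps the inner elimination consistent: one must check that at step $t$ the off-diagonal factors genuinely fall under the inductive hypothesis (they do, precisely because $i+t\le k-1$ for $i\le k-t-1$), and that after each elimination the residual equation retains the clean truncated form $\sum_{i=0}^{k-t}\alpha_i\sigma(a_{k-i})=0$, so that the next right-multiplication isolates exactly one diagonal square. The only non-formal move is the squaring fact $m\sigma(a)^2=0\Rightarrow m\sigma(a)=0$, where all three consequences of $\sigma$-reducedness are needed together: compatibility to pass between $a$ and $\sigma(a)$, and rigidity to drop the repeated factor. Everywhere else compatibility and $\sigma$-semicommutativity suffice to upgrade a one-sided annihilation $\alpha_i\sigma(a_t)=0$ into the two-sided $\alpha_iR\sigma(a_t)=0$ that clears the cross terms.
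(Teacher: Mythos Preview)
Your proof is correct and follows essentially the same inductive peeling argument as the paper: induct on $k$, and at each level clear the cross terms by right-multiplying the $k$-th equation successively to isolate a ``square'' that $\sigma$-rigidity kills. The only cosmetic difference is the choice of multiplier: the paper right-multiplies by $\sigma^2(a_t)$ so that $\sigma$-semicommutativity ($\alpha_i\sigma(a_t)=0\Rightarrow \alpha_iR\sigma^2(a_t)=0$) and $\sigma$-rigidity ($\alpha_{k-t}\sigma(a_t)\sigma^2(a_t)=0\Rightarrow \alpha_{k-t}\sigma(a_t)=0$) apply directly, whereas you multiply by $\sigma(a_t)$ and route through $\sigma$-compatibility to obtain the analogous facts $\alpha_iR\sigma(a_t)=0$ and $m\sigma(a)^2=0\Rightarrow m\sigma(a)=0$.
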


\begin{proof} From $UA\overline{\sigma}(A)=0$, we get the following system of equations.
$$\;0=m_0\sigma(a_0)\qquad\qquad\qquad\qquad\qquad\qquad\qquad\quad\;\;\;\leqno(0)$$
$$=m_0\sigma(a_1)+m_1\sigma(a_0)\qquad\qquad\qquad\qquad\qquad\leqno(1)$$
$$\vdots\qquad\qquad\qquad\qquad$$
$$\;=m_0\sigma(a_{n-1})+m_1\sigma(a_{n-2})+\cdots+m_{n-1}\sigma(a_0)\leqno(n-1)$$

\NL For $i+j=0$, Equation (0) implies $\alpha_0\sigma(a_0)=0$. Assume that $k\geq 0$, and suppose that $\alpha_i\sigma(a_j)$ for all $i,j$ with $i+j\leq k$. Now, multiplying Equation $(k+1)$ on the right side by $\sigma^2(a_0)$ we get:
$$\alpha_0\sigma(a_{k+1})\sigma^2(a_0)+\alpha_1\sigma(a_{k})\sigma^2(a_0)+\cdots+\alpha_{k+1}\sigma(a_{0})\sigma^2(a_0)=0\leqno(k+1)'$$
By the inductive hypothesis, we have $\alpha_i\sigma(a_0)=0$ for all $0\leq i\leq k$, then $\alpha_iR\sigma^2(a_0)=0$ for all $0\leq i\leq k$ because $M_R$ is $\sigma$-semicommutative. Thus, Equation $(k+1)'$ gives $\alpha_{k+1}\sigma(a_0)\sigma^2(a_0)=0$ which implies $\alpha_{k+1}\sigma(a_0)=0$, by the $\sigma$-rigidness of $M_R$. So, Equation $k+1$ becomes:
$$\alpha_0\sigma(a_{k+1})+\alpha_1\sigma(a_{k})+\cdots+\alpha_k\sigma(a_1)=0\leqno(k+1)''$$
Multiplying Equation $(k+1)''$ on the right side by $\sigma^2(a_1)$ and use the fact that $\alpha_iR\sigma^2(a_1)=0$ for al $0\leq i\leq k-1$, we get $\alpha_k\sigma(a_1)\sigma^2(a_1)=0$, which implies $\alpha_k\sigma(a_1)=0$. Continuing this procedure yields $\alpha_i(a_j)=0$ for all $i,j$ such that $i + j = k + 1$. Therefore, $\alpha_i\sigma(a_j)=0$ for all nonnegative integers $i,j$ with $i+j=0,1,\cdots,n-1$.
\end{proof}

\par Let $R$ be a commutative domain. The set $T(M)=\{m\in M\;|\;r_R(m)\neq 0\}$ is called the {\it torsion submodule} of $M_R$. If $T(M)=M$ (respectively, $T(M)=0$) then $M_R$ is {\it torsion} (respectively, {\it torsion-free}).

\begin{corollary}\label{cor7}Let $R$ be a commutative domain, $M_R$ a torsion-free module. Then for any integer $n\geq 0$, we have:
\NL$(1)$ $M_R$ is $\sigma$-reduced if and only if $M[x]/M[x](x^n)$ is $\overline{\sigma}$-reduced as $R[x]/(x^n)$-module.
\NL$(2)$ $M_R$ is reduced if and only if $M[x]/M[x](x^n)$ is reduced as $R[x]/(x^n)$-module.
\end{corollary}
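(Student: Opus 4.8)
The plan is to treat the two implications separately, and to observe first that part $(2)$ is exactly the specialization $\sigma=id_R$ of part $(1)$, since a reduced module is by definition $id_R$-reduced; so it suffices to argue part $(1)$ with general $\sigma$. The cases $n=0$ and $n=1$ are degenerate (the module is $0$, respectively $M$ itself), so throughout I assume $n\geq 2$ and work with the isomorphisms $V_n(M)\cong M[x]/M[x](x^n)$ and $V_n(R)\cong R[x]/(x^n)$. For the implication $(\Leftarrow)$ there is nothing new to prove: this is precisely Corollary \ref{cor8}, and its $\sigma=id_R$ instance gives the reduced case. Thus the entire weight of the statement falls on the forward implication $(\Rightarrow)$, and it is here that the hypotheses ``$R$ a commutative domain'' and ``$M_R$ torsion-free'' — which are absent from the converse — must be used in an essential way.

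For $(\Rightarrow)$ I would invoke the characterization that a module is $\sigma$-reduced if and only if it is simultaneously $\sigma$-semicommutative and $\sigma$-rigid \cite[Theorem 2.16]{generalized/rigid}, applied to $V_n(M)$ over $V_n(R)$ with the extended endomorphism $\overline{\sigma}$. So the goal splits into: $V_n(M)$ is $\overline{\sigma}$-semicommutative, and $V_n(M)$ is $\overline{\sigma}$-rigid. The semicommutative half is already in hand: since $M_R$ is $\sigma$-reduced it is in particular $\sigma$-rigid and $\sigma$-semicommutative, so Proposition \ref{prop4} yields at once that $V_n(M)$ is $\overline{\sigma}$-semicommutative. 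I would also record, as a routine componentwise check, that $V_n(M)$ inherits $\overline{\sigma}$-compatibility from the $\sigma$-compatibility built into the definition of a $\sigma$-reduced module, since this is needed for the $\overline{\sigma}$-reduced terminology to apply.

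The substantive step is $\overline{\sigma}$-rigidity: given $U=(m_0,\dots,m_{n-1})\in V_n(M)$ and $A=(a_0,\dots,a_{n-1})\in V_n(R)$ with $UA\overline{\sigma}(A)=0$, I must deduce $UA=0$, i.e.\ that every component $\alpha_i=m_0a_i+m_1a_{i-1}+\cdots+m_ia_0$ of $UA$ vanishes. The engine is Proposition \ref{prop6}: because $M_R$ is $\sigma$-reduced, $UA\overline{\sigma}(A)=0$ yields the pointwise relations $\alpha_i\sigma(a_j)=0$ for all $i+j\leq n-1$. Using $\sigma$-compatibility of $M$ I would rewrite these as $\alpha_i a_j=0$, and then bring in the domain and torsion-free hypotheses: if $A=0$ then $UA=0$ trivially, while if $A\neq 0$, letting $t$ be the least index with $a_t\neq 0$, the relations $\alpha_i a_t=0$ together with $a_t\neq 0$ and torsion-freeness of $M_R$ over the commutative domain $R$ force $\alpha_i=0$ for every index $i$ that is controlled by Proposition \ref{prop6}.

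The main obstacle is precisely the top-degree components. Proposition \ref{prop6} only delivers $\alpha_i\sigma(a_j)=0$ in the range $i+j\leq n-1$, so the cancellation above reaches $\alpha_i=0$ only for $i\leq n-1-t$; the components with $n-t\leq i\leq n-1$ are not annihilated by any $\sigma(a_j)$ that the proposition supplies, so torsion-freeness alone gives no leverage on them. Closing this gap is the crux of the forward implication, and it is exactly where the full strength of the commutative-domain and torsion-free hypotheses would have to be extracted — presumably by a descending induction that feeds each already-established vanishing $\alpha_i=0$ back into the defining equations of $UA\overline{\sigma}(A)$, or by re-running the coefficient analysis of Proposition \ref{prop6} with the lowest and highest nonzero coefficients of $A$ interchanged. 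I expect this top-component analysis to be the delicate point on which the whole argument turns, and it should be scrutinized carefully, since the naive reduction to the two properties of Theorem 2.16 does not by itself control the highest-degree part of $UA$.
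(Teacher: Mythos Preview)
Your overall plan is exactly the paper's: use Corollary~\ref{cor8} for $(\Leftarrow)$, and for $(\Rightarrow)$ split $\overline{\sigma}$-reducedness of $V_n(M)$ into $\overline{\sigma}$-semicommutativity (via Proposition~\ref{prop4}) and $\overline{\sigma}$-rigidity (via Proposition~\ref{prop6} together with torsion-freeness). The paper's proof simply asserts that Proposition~\ref{prop6} gives $\alpha_i\sigma(a_j)=0$ ``for all $i,j$'' and then cancels by torsion-freeness. You go further and correctly notice that Proposition~\ref{prop6} only yields $\alpha_i\sigma(a_j)=0$ for $i+j\leq n-1$, so the top components $\alpha_i$ with $i>n-1-t$ (where $a_t$ is the first nonzero coefficient of $A$) are not reached.

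This is not a merely delicate point to be patched: the gap is fatal, because the forward implication is \emph{false} for every $n\geq 2$. Take $R=M=\K$ a field. Then $R$ is a commutative domain, $M_R$ is torsion-free, and $M_R$ is (even $\sigma$-)reduced. In $V_2(M)=V_2(\K)$ set $U=(1,0)$ and $A=(0,1)$. Then $A\,\overline{\sigma}(A)=(0,1)(0,\sigma(1))=(0,0)$, so $UA\,\overline{\sigma}(A)=0$, while $UA=(0,1)\neq 0$. Hence $V_2(M)$ is not $\overline{\sigma}$-rigid and a fortiori not $\overline{\sigma}$-reduced. This is nothing but the paper's own Example~\ref{exp2} read against the hypotheses of the corollary: $\K$ is a commutative domain and $\K_\K$ is torsion-free and reduced, yet $V_2(\K)\cong\K[x]/(x^2)$ is not reduced. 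So the step you flagged cannot be completed, the paper's proof overstates the conclusion of Proposition~\ref{prop6}, and the corollary as stated does not hold for $n\geq 2$; only the trivial cases $n\in\{0,1\}$ survive.
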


\begin{proof}$(\Leftarrow)$ Obvious from Corollary \ref{cor8}. $(\Rightarrow)$ From Proposition \ref{prop4}, $V_n(M)$ is $\overline{\sigma}$-semicommutative. On the other hand, if we take $U$ and $A$ as in Proposition \ref{prop6} such that $UA\overline{\sigma}(A)=0$, then we get $\alpha_i\sigma(a_j)=0$ for all $i,j$, then $\alpha_i=0$ for all $i$, because $M_R$ is torsion-free and so $UA=0$. Therefore $V_n(M)$ is $\overline{\sigma}$-rigid. Hence $V_n(M)$ is $\overline{\sigma}$-reduced.
\end{proof}

A regular element of a ring $R$ means a nonzero element which is not zero divisor. Let $S$ be a multiplicatively closed subset of $R$ consisting of regular central elements. We may localize $R$ and $M$ at $S$. Let $\sigma$ be an endomorphism of $R$, consider the map $S^{-1}\sigma\colon S^{-1}R\rightarrow S^{-1}R$ defined by $S^{-1}\sigma(a/s)=\sigma(a)/s$ with $\sigma(s)=s$ for any $s\in S$. Then $S^{-1}\sigma$ is an endomorphism of the ring $S^{-1}R$. Clearly $S^{-1}\sigma$ extends $\sigma$, we will denote it by $\sigma$. In the next, we will discuss when the localization $S^{-1}M$ is $\sigma$-rigid as a right $S^{-1}R$-module.

\begin{lemma}\label{laurent/module} For a multiplicatively closed subset $S$ of a ring $R$ consisting of all central regular elements. A right $R$-module $M$ is $\sigma$-rigid if and only if $S^{-1}M$ is $\sigma$-rigid $($as a right $S^{-1}R$-module$)$.
\end{lemma}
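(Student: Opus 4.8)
The plan is to prove the two implications separately, working throughout with the canonical map $\iota\colon M\to S^{-1}M$, $m\mapsto m/1$, and with the localization arithmetic $(m/s)(a/t)=ma/(st)$ and $\overline{\sigma}(a/t)=\sigma(a)/t$ (legitimate precisely because the elements of $S$ are central, regular, and fixed by $\sigma$). The forward direction will need a denominator-clearing step together with a small trick exploiting $\sigma(s)=s$; the reverse direction is formally a one-line pull-back along $\iota$, and the hard part will be the injectivity of $\iota$.

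For $(\Rightarrow)$ I would assume $M_R$ is $\sigma$-rigid and take $m/s\in S^{-1}M$ and $a/t\in S^{-1}R$ with $(m/s)(a/t)\overline{\sigma}(a/t)=0$. Clearing denominators this reads $ma\sigma(a)/(st^2)=0$, so by the definition of equality in the localization there is $w\in S$ with $ma\sigma(a)w=0$. The key idea is to absorb the surplus factor $w$ by setting $b:=aw\in R$: since $w$ is central and $\sigma(w)=w$, one gets $\sigma(b)=\sigma(a)w$ and hence $mb\sigma(b)=m(aw)(\sigma(a)w)=ma\sigma(a)w^2=(ma\sigma(a)w)w=0$, using centrality of $w$ to move both copies across the scalars. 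Applying the $\sigma$-rigidity of $M$ to the element $b$ yields $mb=maw=(ma)w=0$ with $w\in S$, which is exactly the statement that $ma/(st)=0$ in $S^{-1}M$. Thus $(m/s)(a/t)=0$, and $S^{-1}M$ is $\sigma$-rigid.

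For $(\Leftarrow)$ I would assume $S^{-1}M$ is $\sigma$-rigid and take $m\in M$, $a\in R$ with $ma\sigma(a)=0$. Applying $\iota$ gives $(m/1)(a/1)\overline{\sigma}(a/1)=ma\sigma(a)/1=0$, so the $\sigma$-rigidity of $S^{-1}M$ forces $(ma)/1=0$, i.e. $maw=0$ for some $w\in S$. To conclude $ma=0$ I need $w$ to act injectively on $M$, equivalently that $\iota$ be injective, equivalently that $M$ be $S$-torsion-free. This is where I expect the main obstacle to lie: a central regular element of $R$ need not act injectively on an arbitrary module, and in fact the reverse implication as literally stated can fail — for $R=\Z$, $\sigma=id_R$, $S=\Z\setminus\{0\}$ and $M=\Z/4\Z$ one has $S^{-1}M=0$ (vacuously $\sigma$-rigid) while $M$ is not rigid. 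I would therefore either invoke the standing torsion-free setting in which the lemma is intended — as in Corollary \ref{cor7}, where $R$ is a domain and $M$ is torsion-free, so that $\iota$ is an embedding and $ma=0$ follows at once from $maw=0$ — or explicitly add $S$-torsion-freeness of $M$ to the hypotheses. With the injectivity of $\iota$ secured, the reverse direction closes immediately.
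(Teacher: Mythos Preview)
Your forward direction is correct and in fact strictly more careful than the paper's. The paper simply asserts that $(m/s)(a/t)\sigma(a/t)=0$ in $S^{-1}M$ forces $ma\sigma(a)=0$ in $M$, and then invokes the $\sigma$-rigidity of $M$; but that passage already presupposes the injectivity of $\iota$, which is precisely the point you go on to flag. Your absorption trick with $b=aw$ sidesteps this entirely and establishes the forward implication in full generality, with no torsion hypothesis on $M$ --- a genuine improvement over the paper's argument.

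Your analysis of the reverse direction is also correct, and your counterexample ($R=\Z$, $\sigma=id_R$, $S=\Z\setminus\{0\}$, $M=\Z/4\Z$) genuinely refutes the lemma as literally stated. The paper's proof of this direction is the one-liner ``the class of $\sigma$-rigid modules is closed under submodules'', which tacitly identifies $M$ with its image in $S^{-1}M$ and hence again assumes $\iota$ injective. The only use the paper makes of the lemma is Proposition~\ref{cor1}, where $S=\{1,x,x^2,\dots\}$ acts on $M[x]$; there multiplication by each $x^n$ is visibly injective, so $M[x]$ is $S$-torsion-free and both the paper's argument and yours go through cleanly. Your instinct to add $S$-torsion-freeness of $M$ (equivalently, injectivity of $\iota$) as an explicit hypothesis is the correct repair.
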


\begin{proof}Clearly, if $S^{-1}M$ is $\sigma$-rigid then $M$ is $\sigma$-rigid, because the class of $\sigma$-rigid modules is closed under submodules. Conversely, suppose that $M_R$ is $\sigma$-rigid, let $(m/s)\in S^{-1}M$ and $(a/t)\in S^{-1}R$ such that $(m/s)(a/t)\sigma((a/t))=0$ in $S^{-1}M$. Then $ma\sigma(a)=0$, by hypothesis $ma=0$. Hence $(m/s)(a/t)=0$, so that $S^{-1}M$ is $\sigma$-rigid.
\end{proof}

\begin{proposition}\label{cor1}Let $M$ be a right $R$-module. Then $M[x]_{R[x]}$ is $\sigma$-rigid if and only if $M[x,x^{-1}]_{R[x,x^{-1}]}$ is $\sigma$-rigid.
\end{proposition}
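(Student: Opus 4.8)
The plan is to realize the Laurent extension $M[x,x^{-1}]$ as a localization of $M[x]$ and then invoke Lemma~\ref{laurent/module} with the base ring $R[x]$ in place of $R$. Set $S=\{x^n : n\geq 0\}$. First I would check that $S$ is a multiplicatively closed subset of $R[x]$ consisting of central regular elements: in the classical extension $R[x]$ the indeterminate $x$ commutes with every element, and a degree (leading-coefficient) argument shows that $f(x)x=0$ forces $f(x)=0$, so each $x^n$ is a nonzero non-zero-divisor. Next I would observe that the extension of $\sigma$ to $R[x]$ fixes $x$, hence $\sigma(x^n)=x^n$ for all $n$, which is exactly the condition ``$\sigma(s)=s$ for $s\in S$'' needed to form $S^{-1}\sigma$; moreover the induced endomorphism $S^{-1}\sigma$ on $S^{-1}R[x]$ coincides with the standard extension of $\sigma$ to $R[x,x^{-1}]$.

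The second step is the identification $S^{-1}(R[x])=R[x,x^{-1}]$ and $S^{-1}(M[x])=M[x,x^{-1}]$, compatibly with the module actions. Every element of $R[x,x^{-1}]$ has the form $f(x)x^{-n}$ and every element of $M[x,x^{-1}]$ has the form $m(x)x^{-n}$ with $f(x)\in R[x]$ and $m(x)\in M[x]$, and these are precisely the fractions $f(x)/x^n$ and $m(x)/x^n$; since $x$ is a central unit in the Laurent extension, the two module structures match. With these identifications in hand, Lemma~\ref{laurent/module} applied to $(R[x],M[x],S)$ yields immediately that $M[x]$ is $\sigma$-rigid if and only if $S^{-1}(M[x])=M[x,x^{-1}]$ is $\sigma$-rigid, which is the claim.

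I expect the only delicate point to be the bookkeeping in the second step, namely verifying that the canonical map $S^{-1}(M[x])\to M[x,x^{-1}]$ is an isomorphism of right $R[x,x^{-1}]$-modules intertwining $S^{-1}\sigma$ with the extended $\sigma$; everything else is a direct application of the preceding lemma. (An alternative to the localization route would be a direct ``clear denominators'' argument, multiplying a relation $m(x)f(x)\sigma(f(x))=0$ by a suitable power of the central unit $x$ to land in $M[x]$ and $R[x]$, then transporting $\sigma$-rigidity back; but the localization framing is cleaner.) As a sanity check on the easy direction, note that $M[x]$ is a submodule of $M[x,x^{-1}]$, so $\sigma$-rigidity of the Laurent extension already forces $\sigma$-rigidity of $M[x]$ by closure of $\sigma$-rigid modules under submodules (the fact used in the proof of Lemma~\ref{laurent/module}); the localization argument is what supplies the nontrivial converse.
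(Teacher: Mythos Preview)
Your proposal is correct and follows exactly the approach in the paper: take $S=\{1,x,x^2,\dots\}\subseteq R[x]$, note that $S$ consists of central regular elements with $\sigma(s)=s$, identify $S^{-1}R[x]=R[x,x^{-1}]$ and $S^{-1}M[x]=M[x,x^{-1}]$, and apply Lemma~\ref{laurent/module}. Your write-up is more detailed than the paper's (which dispatches the verification in one line), but the strategy is identical.
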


\begin{proof}Consider $S=\{1,x,x^2,\cdots\}\subseteq R[x]$. It is clear that $S$ is a multiplicatively closed subset of the ring $R[x]$ consisting of all central regular elements. Also, we have $S^{-1}M[x]=M[x,x^{-1}]$ and $S^{-1}R[x]=R[x,x^{-1}]$. By Lemma \ref{laurent/module}, we get the result.
\end{proof}

\begin{corollary}Let $R$ be a ring and $M$ a right $R$-module. Then
\NL$(1)$  $M[x]_{R[x]}$ is rigid if and only if $M[x,x^{-1}]_{R[x,x^{-1}]}$ is rigid,
\NL$(2)$  $R[x]$ is rigid if and only if $R[x,x^{-1}]$ is rigid.
\end{corollary}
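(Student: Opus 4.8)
The plan is to deduce both statements directly from Proposition \ref{cor1} by specializing the endomorphism $\sigma$, since throughout the paper ``rigid'' means ``$id_R$-rigid'' for modules and ``$id_R$-rigid ring'' for rings. In other words, neither part requires new work: each is an instance of the Laurent-extension equivalence already established.

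For part $(1)$, I would take $\sigma = id_R$ in Proposition \ref{cor1}. The one point that needs a remark is the identification of the extended endomorphism: the extension $\overline{\sigma}$ of $id_R$ to $R[x]$ (and to $R[x,x^{-1}]$) acts by applying $id_R$ to each coefficient, hence fixes every polynomial and therefore equals $id_{R[x]}$ (resp. $id_{R[x,x^{-1}]}$). Consequently ``$M[x]_{R[x]}$ is $\sigma$-rigid'' with $\sigma = id_R$ is exactly the assertion that $M[x]_{R[x]}$ is rigid, and likewise on the Laurent side. Applying Proposition \ref{cor1} then yields the equivalence in $(1)$.

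For part $(2)$, I would specialize part $(1)$ to the regular module $M = R_R$. Then $M[x] = R[x]$ is the regular right $R[x]$-module and $M[x,x^{-1}] = R[x,x^{-1}]$ is the regular right $R[x,x^{-1}]$-module. Using the standard fact that a ring $S$ is rigid if and only if $S_S$ is rigid as a module -- one direction being immediate by taking the scalar to be $1$, and the converse using that in a reduced ring $sa^2 = 0$ forces $a(sa)=0$, so $(sa)^2 = s(asa) = 0$ and hence $sa = 0$ -- part $(1)$ with $M = R$ gives precisely that $R[x]$ is rigid if and only if $R[x,x^{-1}]$ is rigid.

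There is essentially no hard obstacle here, as the proof is a pure specialization of Proposition \ref{cor1}. The only steps requiring care are bookkeeping: confirming that the extension of the identity endomorphism is again the identity on the polynomial and Laurent rings, and recording the (routine) equivalence between ring-rigidity and module-rigidity of the regular module so that part $(2)$ follows from part $(1)$.
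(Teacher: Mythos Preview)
Your proof is correct and follows the paper's approach exactly: the paper's entire proof reads ``Clearly from Proposition~\ref{cor1}.'' Your additional care in verifying that the extension of $id_R$ is $id_{R[x]}$, and in explicitly justifying why rigidity of the ring $S$ coincides with rigidity of the regular module $S_S$ (a point the paper silently takes for granted), only makes the argument more complete.
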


\begin{proof}Clearly from Proposition \ref{cor1}.
\end{proof}

\begin{proposition}\label{prop3}If $M[x;\sigma]_{R[x;\sigma]}$ is rigid then $M_R$ is $\sigma$-rigid.
\end{proposition}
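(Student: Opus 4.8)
The plan is to exploit the defining rigidity of $M[x;\sigma]$ over $R[x;\sigma]$ by feeding it carefully chosen test elements. Recall that ``rigid'' means $id$-rigid, so the hypothesis reads: for every $m(x)\in M[x;\sigma]$ and $f(x)\in R[x;\sigma]$, the equation $m(x)f(x)^2=0$ forces $m(x)f(x)=0$. We are given $m\in M$ and $a\in R$ with $ma\sigma(a)=0$, and we must deduce $ma=0$.

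First I would take the constant polynomial $m(x):=m\in M[x;\sigma]$ together with the monomial $f(x):=ax\in R[x;\sigma]$. The point of placing $a$ in degree one rather than degree zero is that the exponent shift in the scalar product $(*)$ is exactly what produces the twist $\sigma(a)$ from a repeated multiplication. Using $(*)$, I compute $m(x)f(x)=(ma)x$, since the only contributing term has $i=0,j=1$ and hence $m_0\sigma^0(a_1)=ma$. Applying $(*)$ once more, $m(x)f(x)^2=\big((ma)x\big)(ax)$; now the contributing term has $i=1,j=1$, giving $(ma)\sigma(a_1)x^2=\big(ma\sigma(a)\big)x^2$.

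Since $ma\sigma(a)=0$ by hypothesis, we have $m(x)f(x)^2=0$. The rigidity of $M[x;\sigma]$ then yields $m(x)f(x)=0$, that is $(ma)x=0$ in $M[x;\sigma]$, whence $ma=0$ by comparing the degree-one coefficients. This proves that $M_R$ is $\sigma$-rigid.

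There is essentially no analytic obstacle here; the whole argument hinges on the single modelling choice of encoding the scalar $a$ as the degree-one coefficient of $f(x)$. The place that demands care is simply the bookkeeping of the $\sigma^i$ factors in $(*)$: one must verify that it is the \emph{second} application of $f(x)$, not the first, that introduces $\sigma(a)$, so that ordinary rigidity of the extension translates precisely into the $\sigma$-twisted rigidity of $M$. Had one instead taken $f(x)=a$ as a constant, the product would only see the untwisted $a^2$, and the argument would fail to deliver $\sigma$-rigidity.
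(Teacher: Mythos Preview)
Your proof is correct and follows exactly the same approach as the paper: both take the constant $m(x)=m$ and the monomial $f(x)=ax$, compute $m(x)f(x)^2=ma\sigma(a)x^2=0$, and invoke rigidity of $M[x;\sigma]$ to conclude $m(x)f(x)=(ma)x=0$, hence $ma=0$. Your additional commentary on why $f(x)=ax$ (rather than the constant $a$) is the right choice is a helpful elaboration, but the argument itself is identical.
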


\begin{proof}Assume that $M[x;\sigma]$ is rigid. Consider $m(x)=m\in M[x;\sigma]$ and $f(x)=ax\in R[x;\sigma]$ such that $ma\sigma(a)=0$. We have $m(x)[f(x)]^2=ma\sigma(a)x^2=0$, then $m(x)f(x)=0$ because $M[x;\sigma]$ is rigid. But $m(x)f(x)=0$ implies $ma=0$. Which completes the proof.
\end{proof}

According to Krempa \cite{krempa}, if a ring $R$ is $\sigma$-rigid then $R[x;\sigma]$ is reduced, hence $R[x;\sigma]$ is rigid. Also, if a module $M_R$ is $\sigma$-rigid and $\sigma$-semicommutative, then $M[x;\sigma]_{R[x;\sigma]}$ is reduced \cite[Theorem 1.6]{lee/zhou}. However, we do not know if we can drop the $\sigma$-semicommutativity from this implication?

\section*{acknowledgments}The authors are deeply indebted to the referee for many helpful comments and suggestions for the improvement of this paper.

\end{document}